\newtheorem{theorem}{Theorem}[section]
\newtheorem*{theorem*}{Theorem}
\newtheorem{lemma}{Lemma}[section]
\newtheorem{corollary}[theorem]{Corollary}
\newtheorem{proposition}{Proposition}[section]
\def\Ric{\text{Ric}}
\def\Ric{\operatorname{Ric}}
\def\W{\operatorname{W}}
\numberwithin{equation}{section}
\begin{document}
	\title[Schwarz Lemmata]{General Schwarz Lemmata  and their applications}

\author{Lei Ni}
\address{Lei Ni. Department of Mathematics, University of California, San Diego, La Jolla, CA 92093, USA}
\email{lni@math.ucsd.edu}


\begin{abstract} We prove estimates interpolating the Schwarz Lemmata of Royden-Yau and the ones recently established by the author. These more flexible estimates provide additional information on  (algebraic) geometric aspects of compact K\"ahler manifolds with nonnegative holomorphic sectional curvature, nonnegative $\Ric_\ell$ or positive $S_\ell$.

\bigskip

 {\it Dedicated to Professor Luen-Fai Tam on the occasion of his 70th birthday.}

\end{abstract}

\maketitle

\section{Introduction}

There are many generalizations of the classical Schwarz Lemma on holomorphic maps between unit balls via the work of  Ahlfors, Chen-Cheng-Look, Lu, Mok-Yau, Royden, Yau,  etc (see \cite{Kobayashi-H} and \cite{Roy, Yau-sch} and references therein). The one obtained by Royden \cite{Roy} states:
\begin{theorem}\label{thm-sch-roy}
Let $f: M^m\to N^n$ be a holomorphic map. Assume that the holomorphic sectional curvature of $N$, $H(Y)\le -\kappa |Y|^4, \, \forall Y\in T'N$ and the Ricci curvature of $M$, $\Ric^M(X, \overline{X})\ge -K |X|^2, \, \forall X\in T'M$ with $\kappa, K>0$. Let $d=\dim(f(M))$. Then
\begin{equation}\label{eq:sch-roy1}
\|\partial f\|^2(x) \le \frac{2d}{d+1}\frac{K}{\kappa}.
\end{equation}
\end{theorem}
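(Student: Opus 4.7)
The plan is to apply the Bochner technique to the function $u := \|\p f\|^2$ and then invoke Royden's polarization lemma, which is the source of the sharp factor $\frac{d+1}{2d}$.

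At any point $x_0\in M$ I would choose holomorphic normal coordinates $\{z^i\}$ on $M$ at $x_0$ and $\{w^\a\}$ on $N$ at $f(x_0)$, and perform a unitary diagonalization of $\p f$ so that $\p f(\p/\p z^i) = \lambda_i\, \p/\p w^i$ for $1\le i\le d$ and $\lambda_i = 0$ for $i>d$; thus $u(x_0) = \sum_i \lambda_i^2$. The standard Bochner formula for a holomorphic map between Kähler manifolds then gives, after discarding the nonnegative $|\nabla \p f|^2$ term,
\begin{equation*}
\Delta u \;\ge\; \sum_i \Ric^M_{i\bar i}\,\lambda_i^2 \;-\; \sum_{i,j=1}^d R^N_{i\bar i j\bar j}\,\lambda_i^2 \lambda_j^2.
\end{equation*}
The Ricci hypothesis on $M$ bounds the first sum below by $-Ku$.

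The main obstacle is the curvature term involving $R^N$. Here one needs Royden's polarization lemma: if the holomorphic sectional curvature of $N$ satisfies $H\le -\kappa$, then
\begin{equation*}
\sum_{i,j=1}^d R^N_{i\bar i j\bar j}\,\lambda_i^2 \lambda_j^2 \;\le\; -\,\frac{d+1}{2d}\,\kappa\, u^2.
\end{equation*}
I would prove this by inserting $V = \sum c_i\, \p/\p w^i$ into $H(V)\le -\kappa|V|^4$ and averaging over $c_i\in\{\pm 1\}$ (equivalently, integrating over a real torus); the combinatorics package the diagonal $R^N_{i\bar i i\bar i}$ and off-diagonal $R^N_{i\bar i j\bar j}$ bisectional terms in exactly the proportion producing $\frac{d+1}{2d}$. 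Combined with the previous step one obtains
\begin{equation*}
\Delta u \;\ge\; -Ku \;+\; \frac{d+1}{2d}\,\kappa\, u^2.
\end{equation*}

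To conclude I would apply a maximum principle. If $M$ is compact, at a maximum of $u$ one has $\Delta u\le 0$, forcing $\frac{d+1}{2d}\kappa\, u^2 \le Ku$ and hence \eqref{eq:sch-roy1}. When $M$ is only complete and noncompact, the lower Ricci bound permits the Omori--Yau maximum principle, producing a sequence $x_k$ with $u(x_k)\to \sup u$ and $\Delta u(x_k)\to 0$, so the algebraic conclusion passes to the limit. If the pointwise rank is some $r<d$ the same argument yields $\frac{2r}{r+1}\frac{K}{\kappa}\le \frac{2d}{d+1}\frac{K}{\kappa}$, so \eqref{eq:sch-roy1} holds on all of $M$.
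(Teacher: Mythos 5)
Your route --- the Chern--Lu/Bochner inequality for $u=\|\p f\|^2$, Royden's polarization lemma for the target curvature term, and a generalized maximum principle --- is the classical argument, and it is essentially the same mechanism by which the paper obtains this statement: Theorem \ref{thm-sch-roy} is quoted from Royden and then recovered as the case $\ell=m$ of Theorem \ref{thm:main1}(i), where $U_m=\|\p f\|^2$ is smooth, the curvature term is handled by Lemma \ref{lem:41} (a sphere-averaged form of the polarization lemma), and the noncompact case is treated by a cutoff function together with the Laplacian comparison theorem. Two steps of your write-up, however, need repair before the proof is complete.

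First, the averaging. Taking $V=\sum_i c_i\lambda_i\,\p/\p w^i$ and averaging $H(V)$ over signs $c_i\in\{\pm1\}$ is \emph{not} equivalent to averaging over the torus: the monomial $c_ic_jc_kc_l$ survives sign-averaging whenever each index occurs an even number of times, so the uncontrolled terms $R^N(v_i,\bar v_j,v_i,\bar v_j)$ with $i\ne j$ do not cancel and the combinatorics fail. One must average over full phases $c_i=e^{\sqrt{-1}\theta_i}$ (then only the pairings with $\{i,k\}=\{j,l\}$ survive, and the K\"ahler symmetry $R_{i\bar kk\bar i}=R_{i\bar ik\bar k}$ yields $2\sum_{i,j}R^N_{i\bar ij\bar j}\lambda_i^2\lambda_j^2\le-\kappa\bigl(u^2+\sum_i\lambda_i^4\bigr)$), or average over the unit sphere as in Lemma \ref{lem:41}; combined with $\sum_{i\le r}\lambda_i^4\ge u^2/r$ at a point of rank $r\le d$ this produces the factor $\frac{d+1}{2d}$ as you claim. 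Second, the Omori--Yau principle applies to functions that are bounded above, which is not known for $u$ a priori; you should either apply it to a bounded auxiliary function such as $u/(u+a)$, or localize with $\varphi_R=\eta(r/R)$ and invoke the Laplacian comparison theorem (available under $\Ric^M\ge-K$), as is done in Section 4 of the paper. With these two corrections the argument goes through.
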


In \cite{Ni-1807} the author proved a new version which only involves the holomorphic sectional curvature of domain and target manifolds.  Recall that for the tangent map $\partial f: T_x'M \to T'_{f(x)}N$ we define its maximum norm square to be
\begin{equation}\label{eq:1}
\|\partial f\|^2_0(x)\doteqdot \sup_{v\ne 0}\frac{|\partial f(v)|^2}{|v|^2}.
\end{equation}

\begin{theorem}\label{thm:sch1} Let $(M, g)$ be a complete K\"ahler manifold such that the holomorphic sectional curvature $H^M(X)/|X|^4 \ge -K$ for some $K\ge0$. Let $(N^n, h)$ be a  K\"ahler manifold such that $H^N(Y)<-\kappa |Y|^4$ for some $\kappa>0$.  Let $f:M\to N$ be  a holomorphic map. Then
\begin{equation}\label{eq:sch-ni}
\|\partial f\|^2_0(x) \le \frac{K}{\kappa}, \forall x\in M,
\end{equation}
provided that the bisectional curvature of $M$ is bounded from below if $M$ is not compact. In particular, if $K=0$, any holomorphic map $f: M\to N$ must be a constant map.
\end{theorem}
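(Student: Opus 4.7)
The natural quantity to attack is $u(x) := \|\partial f\|_0^2(x)$, and the strategy is a maximum-principle argument. Since $u$ is the pointwise supremum of $|\partial f(v)|_h^2/|v|_g^2$ over directions $v$, it is only upper semi-continuous; a convenient remedy is to lift the problem to the projectivized tangent bundle $\pi:\Sigma \to M$ of $T'M$ and work with the smooth function $\tilde u(x,[v]) := |\partial f(v)|_h^2/|v|_g^2$, whose fiberwise supremum equals $u(x)$. In the compact case I pick a maximum point $(x_0,[v_0])$ of $\tilde u$; in the complete noncompact case I invoke the Omori--Yau maximum principle, which is available on $M$ (and lifts to $\Sigma$) precisely because the bisectional curvature of $M$ is bounded below, giving in particular a Ricci lower bound. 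This is the structural role of the noncompact hypothesis.

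At such a point, choose a representative $v_0$ with $|v_0|_g = 1$ and set $\lambda := |\partial f(v_0)|_h^2 = u(x_0)$, which we may assume is positive. I would then work in a unitary frame $\{e_i\}$ of $T'M$ near $x_0$ with $e_1 = v_0$ and a unitary frame $\{\varepsilon_\alpha\}$ of $T'N$ near $f(x_0)$ with $\varepsilon_1 = \partial f(v_0)/\sqrt{\lambda}$, together with normal coordinates so that the connection forms vanish at the base points. The fiber-criticality $\partial_{[v]}\tilde u(x_0,[v_0]) = 0$ identifies $v_0$ as an eigenvector of $\partial f^{\ast}\partial f$; after a singular-value normalization the matrix $(f_i^{\alpha})$ is diagonal at $x_0$ with $\sqrt{\lambda}$ in the $(1,1)$ slot. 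The horizontal lift of $v_0$, used for the horizontal Laplacian on $\Sigma$, is parallel transport of $v_0$ along geodesics, so only the single direction $v_0$ is varying during the Bochner computation.

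The core step is to compute $\Delta_\Sigma \log \tilde u$ at $(x_0,[v_0])$ via the Bochner formula applied to the holomorphic section $\partial f$. Because the variation is along a single direction $v_0$ rather than a trace, the source curvature enters only through $H^M(v_0)$ instead of the Ricci contraction that appears in the Royden--Yau calculation. On the target side, the off-diagonal bisectional contributions $R^N_{\varepsilon_1 \bar\varepsilon_1 \varepsilon_i \bar\varepsilon_i}$ either cancel or can be absorbed using the eigenvector property of $v_0$ together with Berger's polarization identity, leaving the negative term $H^N(\partial f(v_0)) \leq -\kappa\lambda^2$ as the dominant target contribution. This produces the differential inequality
\begin{equation*}
0 \;\geq\; \Delta_\Sigma \log \tilde u(x_0,[v_0]) \;\geq\; \kappa\, \lambda - K,
\end{equation*}
yielding $\lambda \leq K/\kappa$, which is \eqref{eq:sch-ni}. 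In the noncompact setting one runs the same computation at an almost-maximizing sequence produced by Omori--Yau and passes to the limit. When $K=0$ the bound forces $\|\partial f\|_0 \equiv 0$, hence $f$ is constant.

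The main obstacle I anticipate is the curvature bookkeeping in the central computation: showing rigorously that the bisectional-curvature terms of both $M$ and $N$ that show up in the raw Bochner calculation can be replaced, using the eigenvector property of $v_0$ and Berger-type polarization, by holomorphic sectional curvatures of $v_0$ and of $\partial f(v_0)$ alone. This algebraic step is what distinguishes the argument from the classical Royden--Yau one and explains why the operator norm $\|\partial f\|_0$ (rather than the Hilbert--Schmidt norm $\|\partial f\|$) is precisely the right quantity to bound under a one-sided holomorphic sectional curvature hypothesis.
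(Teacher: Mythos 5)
Your compact-case outline is essentially sound and close in spirit to the paper's argument: the paper handles the upper semicontinuity of $\|\partial f\|_0^2$ not by passing to the projectivized bundle but by the smooth barrier $U_1=g^{1\bar 1}A_{1\bar 1}$ built from the maximizing direction at $x_0$ (Propositions \ref{prop:21} and \ref{prop:31} with $\ell=1$), which touches $\sigma_1=\|\partial f\|_0^2$ from below with equality at $x_0$; either device is acceptable. One correction to your anticipated ``curvature bookkeeping'': when you differentiate only in the single direction $v_0$ and the barrier involves only the component along $\varepsilon_1=\partial f(v_0)/|\partial f(v_0)|$, no off-diagonal target terms $R^N(\varepsilon_1,\bar\varepsilon_1,\varepsilon_i,\bar\varepsilon_i)$ appear at all --- the target curvature enters exactly as $-H^N(\partial f(v_0))\ge \kappa|\partial f(v_0)|^4$ --- so no Berger-type polarization is needed for $\ell=1$ (that averaging is what Lemma \ref{lem:41} uses for the trace-type estimates with $\ell>1$).

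The genuine gap is in the noncompact case. You assert that the bisectional curvature lower bound serves only to supply a Ricci lower bound so that the Omori--Yau (Laplacian) maximum principle applies. That cannot work. Your key inequality comes from applying $\partial_{v_0}\bar\partial_{v_0}$ in the \emph{single} maximizing direction, not from tracing; this is precisely what makes $H^M(v_0)\ge -K$ (rather than $\Ric^M$) appear on the source side and $H^N$ (rather than target bisectional curvature) on the target side. The Laplacian form of Omori--Yau only yields almost-nonpositivity of the trace $\Delta\log\tilde u$ along the almost-maximizing sequence, and the traced Bochner identity reintroduces $\Ric^M(v_0,\bar v_0)$ together with $\sum_\gamma R^N(\varepsilon_1,\bar\varepsilon_1,\partial f(e_\gamma),\overline{\partial f(e_\gamma)})$, neither of which the hypotheses control; you would land back at a Royden--Yau type estimate, not \eqref{eq:sch-ni}. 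What is actually required is control of the single complex Hessian entry $\partial_{v_0}\bar\partial_{v_0}$ of the cutoff (equivalently of the distance function), i.e.\ the complex Hessian comparison theorem of Li--Wang, and this is exactly where the bisectional curvature lower bound enters irreplaceably --- the paper notes that only when $\ell=\dim M$ (the traced case) can the hypothesis be weakened to a Ricci lower bound. Accordingly, your displayed inequality $0\ge \Delta_\Sigma\log\tilde u(x_0,[v_0])\ge \kappa\lambda-K$, read with $\Delta_\Sigma$ a genuine Laplacian, is not justified; it must be replaced by the single-direction inequality together with a Hessian-type comparison (or the exhaustion condition \eqref{eq:2}) to absorb the cutoff terms.
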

 The assumption on the bisectional curvature lower bound can be replaced with the existence of an exhaustion function $\rho(x)$ which satisfies that
\begin{equation}\label{eq:2}
\limsup_{\rho\to \infty} \left(\frac{|\partial \rho|+[\sqrt{-1}\partial \bar{\partial} \rho]_{+}}{\rho}\right)=0.
\end{equation}
The proof  uses   a viscosity consideration from PDE theory. It is also reminiscent of Pogorelov's Lemma \cite{Pogo}  (cf. Lemma 4.1.1 of \cite{Gu}) for Monge-Amp\`ere equation, since the maximum eigenvalue of $\nabla^2 u$ is the $\|\cdot\|_0$ for the normal map $\nabla u$ for any smooth  $u$.
A consequence of Theorem \ref{thm:sch1} asserts that {\it the equivalence of the negative amplitude of the holomorphic sectional curvature implies the equivalence of the metrics}. Namely if $M^m$ admits two K\"ahler metrics $g_1$ and $g_2$ satisfying that
$$
-L_1|X|_{g_1}^4\le H_{g_1}(X)\le -U_1|X|_{g_1}^4, \quad -L_2|X|_{g_2}^4 \le H_{g_2}(X)\le -U_2|X|_{g_2}^4
$$
then for any $v\in T_x'M$ we have the estimates:
$$
|v|^2_{g_2}\le \frac{L_1}{U_2}|v|^2_{g_1};\quad |v|^2_{g_1}\le  \frac{L_2}{U_1}|v|^2_{g_2}.
$$
This result can be viewed as a stability statement of the classical result asserting that a complete K\"ahler manifold with the negative constant holomorphic sectional curvature must be a quotient of the complex hyperbolic space form.  Motivated by Rauch's work which induces much work towards the $1/4$-pinching theorem and, the above stability of K\"ahler metrics it is natural to ask  {\it whether or not  a K\"ahler manifold $M$ with its homomorphic sectional curvature being close to $-1$ is biholomorphic to a  quotient of the complex hyperbolic space.} Besides the  Liouville type theorem for holomorphic maps into manifolds with negative holomorphic sectional curvature, we shall show in Section 5 further implications of this estimate towards the structure of the fundamental groups of manifolds with nonnegative holomorphic sectional curvature.

 Before we state another recent  result of the author we first   recall some basic notions from Grassmann algebra \cite{Fede, Whit}. Let $\mathbb{C}^m$ be a complex Hermitian space (later we will identify the holomorphic tangent spaces $T_x'M$ and $T'_{f(x)}N$ with $\mathbb{C}^m$ and $\mathbb{C}^n$). Let $\wedge^\ell \mathbb{C}^m$ be the spaces of $\ell$-multi-vectors $\{v_1\wedge\cdots \wedge v_\ell\}$ with $v_i\in \mathbb{C}^m$. For ${\bf{a}}=v_1\wedge\cdots \wedge v_\ell, {\bf{b}}=w_1\wedge \cdots w_\ell$, the inner product can be defined as  $\langle {\bf{a}}, \overline{{\bf{b}}}\rangle =\det(\langle v_i, \bar{w}_j\rangle)$. This endows $\wedge^\ell \mathbb{C}^n$ an Hermitian structure, hence a norm $|\cdot|$. There are also other norms, such as the {\it mass} and the {\it comass}, which shall be denoted as $|\cdot|_0$ as in \cite{Whit}, and  could be useful for some problems. We refer \cite{Whit} Sections 13, 14 for detailed discussions. Assume  that $f: (M^m, g)\to (N^n, h)$ is a holomorphic map between two K\"ahler manifolds. Let $\partial f: T'M\to T'N$ be the tangent map. Let $\Lambda^\ell \partial f:\wedge^\ell T_x'M \to \wedge^\ell T_{f(x)}'N$ be the associated map defined as
$\Lambda^\ell \partial f( v_1\wedge \cdots \wedge v_\ell)=\partial f(v_1)\wedge\cdots\wedge \partial f(v_\ell)$. Define $\|\cdot\|_0$ as  $$\|\Lambda^\ell \partial f\|_0(x) \doteqdot\sup_{{\bf a}=v_1\wedge\cdots\wedge v_\ell\ne 0, {\bf a}\in \wedge^\ell T_x'M} \frac{|\Lambda^\ell \partial f({\bf a})|}{|{\bf a}|}.$$
The notion $\|\cdot\|_0$ is adapted to be consistent with the comass notion in \cite{Whit}.
 By the singular value decomposition, we may choose normal coordinates centered at $x_0$ and $f(x_0)$ such that at $x_0$, $df\left(\frac{\partial\, }{\partial z^{\alpha}}\right)=\lambda_\alpha \delta_{i\alpha} \frac{\partial\, }{\partial w^i}$. If we order $\{\lambda_\alpha\}$ such that $|\lambda_1|\ge |\lambda_2|\ge \cdots \ge |\lambda_m|$, $\|\Lambda^\ell \partial f\|_0(x_0)=|\lambda_1\cdots\lambda_\ell|$. It is also easy to see that $\|\partial f \|^2 \doteqdot g^{\alpha\bar{\beta}}h_{i\bar{j}}\frac{\partial f^i}{\partial z^\alpha} \overline{ \frac{\partial f^j}{\partial z^\beta}}=\sum_{\alpha=1}^m |\lambda_\alpha|^2$. The following was proved in Corollary 3.4 of \cite{Ni-1807}.
 \begin{theorem}\label{thm-schni2} Let $f:M^m\to N^n$ ($m\le n$) be a holomorphic map with $M$ being a complete manifold. Assume that  $\Ric^M $ is bounded from below and  the scalar curvature $S^M(x)\ge -K$.  Assume further that   $\Ric^N_m(x)\le -\kappa<0$. Then we have the  estimate
$$
\|\Lambda^m \partial f\|^2_0(x)\le \left(\frac{K}{m\kappa}\right)^m.
$$
\end{theorem}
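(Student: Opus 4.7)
The plan is to establish the pointwise differential inequality
\[
\Delta \log u \;\ge\; m\kappa\, u^{1/m} - K,
\]
where $u := \|\Lambda^m \p f\|_0^2$, and then invoke the Omori--Yau maximum principle (available because $M$ is complete with $\Ric^M$ bounded below) to conclude $\sup u^{1/m} \le K/(m\kappa)$. This is the ``determinant'' analogue of the trace-type Schwarz inequality underlying Theorem~\ref{thm-sch-roy}: replacing $\|\p f\|^2 = \sum_\alpha \mu_\alpha$ by $u = \prod_\alpha \mu_\alpha$ (where $\mu_\alpha$ are the squared singular values of $\p f$), one trades a bisectional lower bound on $N$ for the weaker $\Ric_m$ hypothesis and bridges the two norms with AM--GM.

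Fix $x_0$ with $u(x_0) > 0$, choose K\"ahler normal coordinates $\{z^\alpha\}$ on $M$ at $x_0$ and $\{w^i\}$ on $N$ at $f(x_0)$, and unitarily align both frames via the singular value decomposition of $\p f(x_0)$ so that $\p_\alpha f^i(x_0) = \lambda_\alpha \delta_{i\alpha}$. Setting $\mu_\alpha := |\lambda_\alpha|^2$ and $B_{\alpha\bar\beta} := h_{i\bar j}(f)\p_\alpha f^i\,\overline{\p_\beta f^j}$, one has $u(x_0) = \mu_1\cdots\mu_m$ and $\log u = \log \det B - \log \det g$. The standard K\"ahler identity $\Delta \log \det g(x_0) = -S^M(x_0)$ handles the second summand via $S^M \ge -K$. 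For $\log\det B$ one applies the chain rule for $\log\det$ and expands using $\p_k\p_{\bar l}h_{i\bar j}(f(x_0)) = -R^N_{i\bar j k\bar l}$; after tracing with $g^{\gamma\bar\delta} = \delta^{\gamma\delta}$ and exploiting the SVD alignment, the second-derivative-of-$f$ contributions with ``tangential'' target index $i \le m$ cancel exactly between the two terms of the chain rule, leaving the clean identity
\[
\Delta \log \det B(x_0) = -\sum_{\gamma,\alpha=1}^m \mu_\gamma R^N_{\gamma\bar\gamma\alpha\bar\alpha} + \sum_{\gamma,\alpha=1}^m \sum_{i=m+1}^n \frac{|\p_\gamma\p_\alpha f^i|^2}{\mu_\alpha},
\]
whose ``normal'' remainder is manifestly non-negative and may be discarded.

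Now apply the hypothesis $\Ric^N_m \le -\kappa$ to the $m$-plane $V := \p f(T'_{x_0}M) \subset T'_{f(x_0)}N$: for each orthonormal basis direction $\tilde w_\gamma$ of $V$, $\sum_{\alpha=1}^m R^N_{\gamma\bar\gamma\alpha\bar\alpha} \le -\kappa$. Multiplying by $\mu_\gamma \ge 0$, summing over $\gamma$, and using AM--GM in the form $\sum_\gamma \mu_\gamma \ge m(\prod_\gamma \mu_\gamma)^{1/m} = m\, u^{1/m}$, one gets
\[
\sum_{\gamma,\alpha=1}^m \mu_\gamma R^N_{\gamma\bar\gamma\alpha\bar\alpha} \;\le\; -\kappa \|\p f\|^2 \;\le\; -m\kappa\, u^{1/m},
\]
so $\Delta \log \det B(x_0) \ge m\kappa\, u(x_0)^{1/m}$ and combining with the scalar curvature estimate yields $\Delta \log u(x_0) \ge m\kappa\, u(x_0)^{1/m} - K$. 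The Omori--Yau maximum principle, applied to $\log(u + \eps)$ and letting $\eps \to 0^+$ to accommodate possible zeros of $u$, then forces $\sup u^{1/m} \le K/(m\kappa)$.

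The main technical obstacle is the algebraic cancellation producing the displayed identity for $\Delta \log\det B$: one must carefully track four tensor indices and leverage the SVD-aligned normal frames to see that all ``tangential'' second-derivative terms annihilate, leaving only the non-negative ``normal'' remainder, so that the curvature term alone suffices for the bound. A secondary concern is an a priori upper bound on $u$, needed so that the Omori--Yau conclusion is not vacuous; this can be arranged either through a separate bound on $\|\p f\|^2$ (modifying Royden's argument under the present $\Ric^N_m$ hypothesis) or via a cutoff/exhaustion argument that exploits the lower Ricci bound on $M$.
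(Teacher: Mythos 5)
Your proposal is correct and follows essentially the same route as the paper's argument for (the $\ell=m$ case of) Theorem \ref{thm:main1}(ii): the $\partial\bar{\partial}$-Bochner identity for $\log(\det A/\det g)$ with the exact cancellation of the tangential second-derivative terms leaving a nonnegative remainder, the application of $\Ric^N_m\le -\kappa$ to the image $m$-plane followed by AM--GM, and a maximum principle under the Ricci lower bound. The only cosmetic difference is at the last step, where the paper closes with a cutoff $\eta(r/R)$ and the Laplacian comparison theorem rather than Omori--Yau --- which is precisely the ``cutoff/exhaustion'' fix you flag for the a priori boundedness of $u$.
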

 Here recall that in \cite{Ni-1807} $\Ric(x, \Sigma)$ is defined as the Ricci curvature of the curvature tensor restricted to the $k$-dimensional subspace $\Sigma\subset T_x'M$. Precisely for any $v\in \Sigma$,  $\Ric(x, \Sigma)(v,\bar{v})\doteqdot \sum_{i=1}^k R(E_i,\overline{E}_i, v,\bar{v})$ with $\{E_i\}$ being a unitary basis of $\Sigma$.  We say that $\Ric_k(x)<0$ if $\Ric(x, \Sigma)<0$  for every k-dimensional subspace $\Sigma$. Clearly $\Ric_k(x)<0$ implies that $S_k(x)<0$,   and  it coincides with $H$ when $k=1$, with the Ricci curvature $\Ric$ when $k=\dim(N)$. Here $S_k(x, \Sigma)$ is defined to be the scalar curvature of the curvature operator restricted to $\Sigma\subset T'_x N$. One can refer to \cite{Ni-1807, Ni, Ni-Zheng2} for the  definitions and related results on the geometric significance of $\Ric_\ell$ and $S_\ell$.

Note that Theorem \ref{thm-schni2} has at least two limits in studying the holomporphic maps. The first it applies only to the case that $\dim(N)$, the dimension of the target manifold is at least as big as the dimension of the domain. The second limit is that it  can only be applied to detect whether or not the map is full-dimensional, namely $\dim(f(M))=\dim(M)$ or not.
The first goal of this paper is to prove a family of estimates for holomorphic maps between K\"ahler manifolds containing the above three results as  special cases. The result below  removes the above mentioned constraints of Theorem \ref{thm-schni2}.

\begin{theorem}\label{thm:main1} Let $f:M^m\to N^n$  be a holomorphic map with $M$ being a complete manifold. When $M$ is noncompact assume either the bisectional curvature is bounded from below or (\ref{eq:2}) holds for some exhaustion function $\rho$. Let $\ell\le \dim(M)$ be a positive integer.

(i) Assume that the holomorphic sectional curvature of $N$, $H^N(Y)\le -\kappa |Y|^4$ and $M$ has, $\Ric_\ell^M\ge -K $, for some $K\ge 0, \kappa >0$. Then
$$
\sigma_\ell(x)\le \frac{2\ell'}{\ell'+1}\frac{K}{\kappa}
$$
where $\sigma_\ell(x)=\sum_{\alpha=1}^\ell |\lambda_\alpha|^2(x)$, and  $\ell'=\min\{\ell, \dim(f(M))\}$. In particular, if $K=0$, the map $f$ must be a constant.

(ii) Assume that  $S^M_\ell(x)\ge -K$ and that   $\Ric^N_\ell(x)\le -\kappa$ for some $K\ge 0, \kappa >0$. Then
$$
\|\Lambda^\ell \partial f\|^2_0(x)\le \left(\frac{K}{\ell\kappa}\right)^\ell.
$$
In particular, if $K=0$, the map $f$ has rank smaller than $\ell$.
\end{theorem}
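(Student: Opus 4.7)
The plan is to combine the viscosity/maximum-principle reasoning of Theorem \ref{thm:sch1} with a Chern--Lu-type Bochner identity that is restricted to a suitably chosen $\ell$-dimensional subbundle. In both parts, fix a point $x_0\in M$ at which the target quantity attains (or nearly attains) its supremum. When $M$ is noncompact I would add an $\varepsilon\rho$ perturbation and send $\varepsilon\to 0$ using the exhaustion hypothesis (\ref{eq:2}) or the bisectional lower bound as in the proof of Theorem \ref{thm:sch1}. At $x_0$ choose an $\ell$-dimensional subspace $\Sigma_0\subset T'_{x_0}M$ on which the top $\ell$ singular values of $\partial f$ are realized, and extend it to a local holomorphic $\ell$-subbundle $\Sigma\subset T'M$ with a unitary-at-$x_0$ holomorphic frame $\{E_\alpha\}_{\alpha=1}^\ell$ that also diagonalizes $\partial f$, so that $\partial f(E_\alpha(x_0))=\lambda_\alpha\varepsilon_\alpha$ for a unitary frame $\{\varepsilon_i\}$ of $T'_{f(x_0)}N$. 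The algebraic content then lives entirely inside $\Sigma$ and its image $\Sigma_f=\partial f(\Sigma)\subset f^*T'N$; by design, the Bochner computation for the restricted objects $g|_\Sigma$ and $f^*h|_\Sigma$ only picks up the $R^M$-components tangent to $\Sigma$ and the $R^N$-components tangent to $\Sigma_f$.

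For part (i) I would take
$$u(x)=\operatorname{tr}_{g|_\Sigma}(f^*h|_\Sigma)(x)=\sum_{\alpha,\beta=1}^\ell (g|_\Sigma)^{\alpha\bar\beta}(x)\,h_{i\bar j}(f(x))\,\phi^i_\alpha(x)\,\bar\phi^j_\beta(x).$$
By the variational description of $\sigma_\ell$ one has $u\le\sigma_\ell$ pointwise with equality at $x_0$, hence $\Delta u(x_0)\le 0$. A direct Chern--Lu computation in the diagonalizing frame, with the extension of $\Sigma$ gauged so that the Gauss correction has the correct sign (or vanishes at $x_0$), yields, after discarding the nonnegative gradient term,
$$\Delta u(x_0)\ \ge\ \sum_{\alpha=1}^\ell\Ric(x_0,\Sigma)(e_\alpha,\bar e_\alpha)|\lambda_\alpha|^2\;-\;\sum_{\alpha,i=1}^{\ell'}R^N_{i\bar i\alpha\bar\alpha}|\lambda_i|^2|\lambda_\alpha|^2.$$
The hypothesis $\Ric_\ell^M\ge-K$ applied to $\Sigma$ controls the source by $-K\sigma_\ell$, while Royden's algebraic polarization of $H^N\le-\kappa$ on the $\ell'$ nonzero singular directions in $\Sigma_f$ yields $\sum_{\alpha,i=1}^{\ell'}R^N_{i\bar i\alpha\bar\alpha}|\lambda_i|^2|\lambda_\alpha|^2\le -\tfrac{\kappa(\ell'+1)}{2\ell'}\sigma_\ell^2$. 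Substituting gives $0\ge -K\sigma_\ell+\tfrac{\kappa(\ell'+1)}{2\ell'}\sigma_\ell^2$, i.e.\ $\sigma_\ell\le\tfrac{2\ell'}{\ell'+1}\tfrac{K}{\kappa}$.

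For part (ii) I would take the determinantal ratio
$$v(x)=\frac{\det(f^*h|_\Sigma)(x)}{\det(g|_\Sigma)(x)}=\prod_{\alpha=1}^\ell|\lambda_\alpha^\Sigma(x)|^2,$$
which again satisfies $v\le\|\Lambda^\ell\partial f\|_0^2$ with equality at $x_0$. Writing $\log v=\log\det(h|_{\Sigma_f}\circ f)-\log\det(g|_\Sigma)+\log|\det(\partial f|_\Sigma)|^2$ and noting that the third term is pluriharmonic for holomorphic choices of $\Sigma$ and $\Sigma_f$, the Chern curvature identities $-\partial\bar\partial\log\det(g|_\Sigma)=\operatorname{Ric}^\Sigma$ and $-\partial\bar\partial\log\det(h|_{\Sigma_f})=\operatorname{Ric}^{\Sigma_f}$ (via the Gauss equation, as in the proof of Theorem \ref{thm-schni2}) give at $x_0$
$$\Delta\log v(x_0)\ \ge\ S_\ell^M(\Sigma)\;-\;\sum_{\gamma=1}^\ell\Ric^N(\Sigma_f)(\varepsilon_\gamma,\bar\varepsilon_\gamma)\,|\lambda_\gamma|^2.$$
Using $S_\ell^M\ge-K$ and, from $\Ric^N_\ell\le-\kappa$, the inequality $\Ric^N(\Sigma_f)(\varepsilon_\gamma,\bar\varepsilon_\gamma)\le-\kappa$ for each $\gamma=1,\dots,\ell$, the maximum-principle inequality $\Delta\log v(x_0)\le 0$ reduces to $\kappa\sum_{\gamma=1}^\ell|\lambda_\gamma|^2\le K$. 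The AM--GM inequality $\sum_{\gamma=1}^\ell|\lambda_\gamma|^2\ge \ell\bigl(\prod_{\gamma}|\lambda_\gamma|^2\bigr)^{1/\ell}$ then delivers $\|\Lambda^\ell\partial f\|_0^2\le (K/(\ell\kappa))^\ell$.

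The main technical obstacle is the Bochner bookkeeping in the restricted setting: one must verify that the holomorphic extension of $\Sigma$ can be chosen so that the Gauss-equation corrections coming from the second fundamental forms of $\Sigma\subset T'M$ and $\Sigma_f\subset f^*T'N$ enter with a favorable sign at $x_0$, so that one genuinely reads off the restricted curvatures $\Ric_\ell^M$ and $S_\ell^M$ rather than the full Ricci or scalar of $M$. Once this is settled, the endpoints $\ell=1$ (Theorem \ref{thm:sch1}), $\ell\ge m$ in part (i) (Royden's Theorem \ref{thm-sch-roy} with $d=\ell'$), and $\ell=m$ in part (ii) (Theorem \ref{thm-schni2}) fall out as special cases, serving as a consistency check for the computation.
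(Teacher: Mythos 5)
Your proposal follows essentially the same route as the paper: the lower barriers are the trace of the upper-left $\ell\times\ell$ block (the paper's $U_\ell$) and the determinant ratio (the paper's $W_\ell$), touching $\sigma_\ell$ and $\|\Lambda^\ell\partial f\|_0^2$ from below at $x_0$; then a $\partial\bar{\partial}$-Bochner identity, Royden's polarization over the $\ell'$ nonzero image directions for (i), AM--GM for (ii), and a cutoff/exhaustion localization in the noncompact case. Two clarifications are needed to make your sketch airtight. First, the operator you write as $\Delta$ cannot be the full Laplacian: to produce $\Ric(x_0,\Sigma)$ (resp.\ $S^M_\ell(\Sigma)$) rather than the full Ricci (resp.\ scalar) curvature of $M$, and to keep the $R^N$-sum over the symmetric range $1\le\alpha,\gamma\le\ell$ where Royden's lemma applies (bisectional terms with $\gamma>\ell$ are not controlled by $H^N\le-\kappa$), you must trace the complex Hessian only over $\Sigma$, i.e.\ apply the degenerate operator $\sum_{\gamma=1}^{\ell}\nabla_\gamma\nabla_{\bar\gamma}$, which is still nonpositive at an interior maximum; this is exactly the point the paper highlights, and your displayed right-hand sides are the correct ones for this partial trace, so the argument then closes. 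Second, the ``main technical obstacle'' you flag --- Gauss-equation corrections from the second fundamental forms of $\Sigma$ and $\Sigma_f$ --- dissolves: taking $\Sigma$ to be the span of the first $\ell$ coordinate fields of normal coordinates adapted by the singular value decomposition, the second fundamental form vanishes at $x_0$ (the Christoffel symbols do), and the Gauss correction is quadratic in it; equivalently, the paper works with the block of the full inverse metric $g^{\alpha\bar\beta}$, whose derivatives at $x_0$ produce precisely the ambient components $R^M_{\alpha\bar\beta\gamma\bar\gamma}$ with $\alpha,\beta,\gamma\le\ell$, so no correction ever appears.
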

Note that part (i) above recovers Theorem \ref{thm-sch-roy} for $\ell=\dim(M)$, and recovers Theorem \ref{thm:sch1} for $\ell=1$. Hence it provides a family of estimates interpolating between Theorem \ref{thm-sch-roy}  and \ref{thm:sch1}. Similarly part (ii) recovers Theorem \ref{thm-schni2} when $\ell=\dim(M)$, and recovers Theorem \ref{thm:sch1} for $\ell=1$, noting that in the case $\ell=\dim(M)$, the assumption on the lower bound of bisectional curvature can be weakened to a lower bound of the Ricci curvature (from the proof this is obvious). Hence part (ii) provides a family of estimates interpolating between Theorem  \ref{thm:sch1} and  \ref{thm-schni2}.  Part (ii) also implies that any K\"ahler manifold with $\Ric_\ell\le -\kappa<0$ must be $\ell$-hyperbolic, a result proved in \cite{Ni-1807}. Moreover it can also be applied to $M$ with $\dim(M)>\ell$ or even $\dim(M)>\dim(N)$  concluding more detailed degeneracy information of the map,  re-enforcing the relationship between the  $\ell$ dimensional ``holomorphic" area of  $N$ and the $\Ric^N_\ell$.

The proof of the result (in Section 4) is built upon extensions of $\partial\bar{\partial}$-Bochner formulae of \cite{Ni-1807}, which are proved in Section 3 after some preliminaries in Section 2.  In Section 5 we show that the estimates can be used to rule out the existence  of certain holomorphic mappings under some curvature conditions (cf. Theorem \ref{thm:51}). In particular Theorem \ref{thm:sch1} (cf. Corollary 5.4 of \cite{Ni-1807}) implies that {\it if  a compact K\"ahler manifold $(M, g)$ has $H\ge 0$, then there is no onto homomorphism from its fundamental group  to the fundamental group of any oriented Riemann surface (complex curve) of genus greater than one.} The more flexible Theorem \ref{thm:main1} extends this statement to include all K\"ahler manifolds with $\Ric_\ell \ge0$ (for some $\ell\in \{1, \cdots, m\}$). Note that a similar statement was proved for Riemannian manifold with positive isotropic curvature in \cite{FW}. In \cite{Tsu, Ni} it was proved that if the holomorphic sectional curvature $H>0$ or more generally $\Ric_\ell>0$  then $\pi_1(M)=\{0\}$. The result here provides some information for the nonnegative case. Note that the examples in \cite{Hitchin} indicate that the class of K\"ahler manifolds with $H>0$ (most of them are not Fano) seems to be much larger than that with $\Ric>0$. There has been  very little known for manifold $M$ with  $H\ge 0$ (or $\Ric_\ell\ge 0$ for $\ell<\dim(M)$) comparing with the situation for compact manifolds with $\Ric\ge 0$. In fact when  $M$ is a compact K\"ahler manifold with nonnegative bisectional curvature, Mok's classification result  \cite{Mok} implies that the fundamental group $\pi_1(M)$ must be a Bieberbach one. In  Corollary 5.1 of \cite{Ni-Tam}  a paper by Tam and the author, this was extended (as a result of F. Zheng)  to the case when $M$ is a non-compact complete K\"ahler manifold, but under the nonnegativity of sectional curvature. For compact Riemannian manifolds with nonnegative Ricci curvature Cheeger-Gromoll \cite{CG} proved that $\pi_1(M)$ must be a finite extension of a Bieberbach group. {\it Could this be proven for a compact K\"ahler manifold with $\Ric_\ell \ge 0$ with $\ell<\dim(M)$}? Note that such a statement can not be possibly true for K\"ahler manifold with $B^\perp\ge 0$ (hence nor with $\Ric^\perp\ge 0$).  In a recent preprint \cite{Mu}, the question has been answered positively for $H\ge 0$, assuming additionally that $M$ is a projective variety. Given that there are many non-algebraic K\"ahler manifolds with $H\ge 0$, our result for general K\"ahler manifolds is not contained in \cite{Mu}.

In \cite{ABCKT}, two invariants were defined for a K\"ahler manifold $M$. One is the so-called Albanese dimension $a(M)\doteqdot \dim_{\mathbb{C}}(Alb(M))$ (we use the complex dimension instead), the dimension of the image of  the Albanese map $Alb: M\to \mathbb{C}^{\dim(H^{1,0}(M))}/H_1(M, \mathbb{Z})$. The other invariant is the genus of $M$, $g(M)$ which is defined as the maximal  $\dim(U)$ with $U$ being an isotropic subspace of $H^1(M, \mathbb{C})$. The above consequence of Theorem \ref{thm:51} can be rephrased as that for $M$ with $H^M(X)\ge 0$, or more generally $\Ric_\ell\ge 0$, we must have  $g(M)\le 1$. The same conclusion is obtained  in Section 6 for K\"ahler manifold $M$ with the Picard number $\rho(M)=1$ and $S_2>0$, or $h^{1,1}(M)=1$.  A corollary  of Theorem \ref{thm:51} concludes that if $S^M_\ell>0$, then $a(M)\le \ell-1$. ( This is also a consequence of the  vanishing theorem proved in \cite{Ni-Zheng2}.) These results endow  the curvature $\Ric_\ell$ and $S_\ell$ some algebraic geometric/topological implications.

In Section 5 we also illustrate that the $C^2$-estimate for the complex Monge-Amp\`ere equation is a special case of our computation in Section 3. In Section 6 we  derive some estimates on the minimal ``energy" needed for a non-constant holomorphic map between certain K\"ahler manifolds extending earlier results in \cite{Ni-1807}.

\section{Preliminaries}
We collect some needed algebraic results. For holomorphic map $f: (M^m, g)\to (N^n, h)$, let $\partial f(\frac{\partial\ }{\partial z^{\alpha}})=\sum_{i=1}^n f^i_{\alpha} \frac{\partial\ }{\partial w^i}$ with respect to local coordinates $(z^1, \cdots, z^m)$ and $(w^1, \cdots, w^n)$. The Hermitian form $A_{\alpha\bar{\beta}}dz^\alpha \wedge dz^{\bar{\beta}}$ with
$A_{\alpha\bar{\beta}}=f^i_{\alpha} \overline{f^j_\beta} h_{i\bar{j}}$ is the pull-back of K\"ahler form $\omega_h$ via $f$. By the singular value decomposition for $x_0\in M$ and $f(x_0)\in N$ we may choose  normal coordinates centered at $x_0$ and $f(x_0)$ such that $\partial f(\frac{\partial\ }{\partial z^\alpha})=\lambda_\alpha \delta^i_{\alpha} \frac{\partial\ }{\partial w^i}$. Then $|\lambda_\alpha|$ are the singular values of $\partial f: (T'_{x_0}M, g) \to (T_{f(x_0)}'N, h)$. It is easy to see that $|\lambda_1|^2 \ge \cdots \ge |\lambda_m|^2$ are the eigenvalues of $A$ (with respect to $g$).

\begin{proposition} \label{prop:21} For any $1\le \ell\le m$ the following holds
$$
\sigma_\ell\doteqdot \sum_{\alpha=1}^\ell |\lambda_\alpha|^2 \ge \sum_{1\le \alpha, \beta\le \ell} g^{\alpha \bar{\beta}}A_{\alpha\bar{\beta}}\doteqdot U_\ell.
$$
\end{proposition}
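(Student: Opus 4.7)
The inequality is purely a piece of Hermitian linear algebra on $T_{x_0}'M$. Indeed, the tensor $A_{\alpha\bar\beta}=f^i_\alpha \overline{f^j_\beta}h_{i\bar j}$ is Hermitian and positive semi-definite, and by the definition of the singular values the associated endomorphism $B \doteqdot g^{-1}A$ on $T_{x_0}'M$ has eigenvalues $|\lambda_1|^2 \ge \cdots \ge |\lambda_m|^2$. The quantity $U_\ell = \sum_{\alpha,\beta=1}^\ell g^{\alpha\bar\beta} A_{\alpha\bar\beta}$ is exactly the trace of $B$ restricted to the coordinate $\ell$-plane $V \doteqdot \text{span}_{\mathbb{C}}\{\p/\p z^1, \ldots, \p/\p z^\ell\}$, whereas $\sigma_\ell$ is the sum of the top $\ell$ eigenvalues of $B$. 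The proposition thus reduces to Ky Fan's maximum principle: for a Hermitian endomorphism with ordered eigenvalues $\mu_1\ge\cdots\ge\mu_m$ and any unitary $\ell$-frame $\{e_1,\ldots,e_\ell\}$, one has $\sum_{\alpha=1}^\ell \langle Be_\alpha, e_\alpha\rangle \le \mu_1+\cdots+\mu_\ell$.

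The plan is to record a direct verification of this one-line fact. Expanding each $e_\alpha$ in an orthonormal eigenbasis $\{f_i\}$ of $B$ as $e_\alpha = \sum_i v_{\alpha i} f_i$, one obtains
\[
\sum_{\alpha=1}^\ell \langle Be_\alpha, e_\alpha\rangle = \sum_{i=1}^m c_i |\lambda_i|^2, \qquad c_i \doteqdot \sum_{\alpha=1}^\ell |v_{\alpha i}|^2.
\]
Extending $\{e_\alpha\}_{\alpha=1}^\ell$ to a full unitary basis of $T_{x_0}'M$ forces $0 \le c_i \le 1$ and $\sum_i c_i = \ell$. Together with $|\lambda_1|^2 \ge \cdots \ge |\lambda_m|^2$, an elementary rearrangement argument shows that $\sum_i c_i |\lambda_i|^2$ is maximized by $c_1=\cdots=c_\ell=1$, $c_{\ell+1}=\cdots=c_m=0$, yielding the stated upper bound $\sigma_\ell$.

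There is no genuine obstacle here; the proposition is a clean corollary of a classical fact. Its subsequent role in the Bochner-type computations of Section 3 is to permit the use of $U_\ell$, a smooth bilinear expression available in any chosen local unitary frame, in place of $\sigma_\ell$, which involves the top $\ell$ eigenvalues and is only piecewise smooth.
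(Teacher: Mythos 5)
Your proof is correct and follows the same route as the paper: both reduce the inequality to the classical fact (Ky Fan's maximum principle, cited in the paper as Horn--Johnson, Corollary 4.3.34) that the trace of a Hermitian operator over any unitary $\ell$-frame is bounded by the sum of its $\ell$ largest eigenvalues. The only difference is that you supply the short eigenbasis-expansion and rearrangement argument for that fact rather than citing it, which is a harmless elaboration.
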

\begin{proof} Arguing invariantly we choose unitary basis of $T'_{x_0}M$ with respect to $g$. Then the left hand side is the partial sum of the eigenvalues of $A$ in descending order, and the right hand side is the trace of the first $\ell\times \ell$ block of $(A_{\alpha\bar{\beta}})$. Hence the result is well-known (cf. \cite{Horn-Johnson}, Corollary 4.3.34).
\end{proof}

For a linear map $L: \mathbb{C}^m\to \mathbb{C}^n$ between two Hermitian linear spaces, $\Lambda^\ell L: \wedge^\ell \mathbb{C}^m \to \wedge^\ell \mathbb{C}^n$ is define as the linear extension of the action on simple vectors:  $\Lambda^\ell L({\bf{a}})\doteqdot L(v_1)\wedge\cdots\wedge L(v_\ell)$ with ${\bf{a}}=v_1\wedge\cdots \wedge v_\ell$. The metric on $\wedge^\ell \mathbb{C}^m$ is defined as $\langle {\bf{a}}, \overline{{\bf{b}}}\rangle =\det(\langle v_i, \bar{w}_j\rangle)$. If $\{e_\alpha\}$ is a unitary frame of $\mathbb{C}^m$, the $\{e_{\lambda}\}$, with  $\lambda=(\alpha_1, \cdots, \alpha_\ell)$, $\alpha_1\le \cdots \le \alpha_\ell$, being the multi-index,    and  $e_{\lambda}=e_{\alpha_1}\wedge \cdots\wedge e_{\alpha_\ell}$, is a unitary frame for $\wedge^\ell \mathbb{C}^m$. The Binet-Cauchy formula implies that this is consistent with the Hermitian product $\langle {\bf{a}}, \overline{{\bf{b}}}\rangle$ defined in the previous section. The norm $\|\Lambda^\ell L\|_0$ is the operator norm with respect to the Hermitian structures of $\wedge^\ell \mathbb{C}^m $ and $\wedge^\ell \mathbb{C}^m$ defined above, which equals to the Jacobian of a Lipschitz map $f$, when $\ell=m$ or $n$, applying to $L=\partial f$ (cf. Section 3.1 of  \cite{Fede}).

For the local Hermitian matrices $A=(A_{\alpha\bar{\beta}})$ and $G=(g_{\alpha\bar{\beta}})$ we denote $A_\ell$ and $G_{\ell}$ be the upper-left $\ell\times \ell$ blocks of them.

\begin{proposition}\label{prop:22} For any $1\le \ell\le m$ the following holds:
\begin{eqnarray}
\|\Lambda^\ell \partial f\|_0^2=\Pi_{\alpha=1}^\ell |\lambda_\alpha|^2&\ge& \frac{\det(A_\ell)}{\det(G_\ell)}\doteqdot W_\ell; \label{eq:21}
\end{eqnarray}
\end{proposition}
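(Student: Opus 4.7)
The plan is to reduce the statement to two short arguments in Hermitian linear algebra at the single point $x_0$, one for the equality in the middle and one for the inequality.

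First, I would verify the equality $\|\Lambda^\ell \partial f\|_0^2 = \prod_{\alpha=1}^\ell|\lambda_\alpha|^2$ by passing to singular value coordinates, as recalled in the introduction. In such coordinates the frame $\{\partial/\partial z^\alpha\}$ is $g$-unitary at $x_0$ and $\partial f(\partial/\partial z^\alpha) = \lambda_\alpha \delta^i_\alpha\,\partial/\partial w^i$, so the induced basis $\{e_\lambda\}$ of $\wedge^\ell T'_{x_0}M$ (with $\lambda=(\alpha_1<\cdots<\alpha_\ell)$) is unitary and $\Lambda^\ell \partial f$ acts diagonally by $\prod_{\alpha\in\lambda}\lambda_\alpha$. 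Writing a general simple $\ell$-vector in this frame, the ratio $|\Lambda^\ell \partial f({\bf a})|^2/|{\bf a}|^2$ becomes a generalized top-$\ell$ Rayleigh quotient of the diagonal operator $\operatorname{diag}(|\lambda_1|^2,\ldots,|\lambda_m|^2)$, which by the Ky Fan / Courant-Fischer maximum principle for products of eigenvalues (see \cite{Horn-Johnson}) equals $\prod_{\alpha=1}^\ell|\lambda_\alpha|^2$ and is attained at ${\bf a}=e_1\wedge\cdots\wedge e_\ell$ with $|\lambda_1|\ge\cdots\ge|\lambda_m|$.

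Next, I would establish the inequality $\prod_{\alpha=1}^\ell|\lambda_\alpha|^2 \ge W_\ell$ in the original local coordinates by specializing the defining supremum to the simple $\ell$-vector ${\bf a}_0 \doteqdot \partial/\partial z^1\wedge\cdots\wedge\partial/\partial z^\ell$. The Gram-determinant formula for the Hermitian inner product on $\wedge^\ell$ gives
$$
|{\bf a}_0|^2 = \det\bigl(g_{\alpha\bar\beta}\bigr)_{1\le\alpha,\beta\le\ell} = \det(G_\ell),
$$
and, since $\partial f(\partial/\partial z^\alpha)=f^i_\alpha\,\partial/\partial w^i$,
$$
|\Lambda^\ell \partial f({\bf a}_0)|^2 = \det\bigl(f^i_\alpha\overline{f^j_\beta}h_{i\bar j}\bigr)_{1\le\alpha,\beta\le\ell} = \det(A_\ell),
$$
so the supremum dominates $\det(A_\ell)/\det(G_\ell)=W_\ell$. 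Combining with the equality from the first step yields (\ref{eq:21}).

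I do not expect any substantial obstacle. The conceptual key is the identification of $|\cdot|^2$ on a simple $\ell$-vector with a Gram determinant of the generating vectors, after which the equality is a Ky Fan-type statement about a pair of Hermitian forms and the inequality is the immediate consequence of plugging one test vector into the defining supremum. The only external input needed is the Ky Fan maximum characterization, which can be cited from \cite{Horn-Johnson} exactly as was done in the proof of Proposition \ref{prop:21}.
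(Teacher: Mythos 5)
Your argument is correct, and it rearranges the paper's ingredients in a slightly different (and for one step, more economical) order. The paper proves the inequality $\prod_{\alpha=1}^\ell|\lambda_\alpha|^2\ge \det(A_\ell)/\det(G_\ell)$ first, as a standalone matrix fact (product of the top $\ell$ eigenvalues dominates the determinant of any leading principal block, cited from Marcus--Minc and Horn--Johnson after normalizing $G=\operatorname{id}$), and then uses that same fact to supply the upper bound in the proof of the equality $\|\Lambda^\ell\partial f\|_0^2=\prod_{\alpha=1}^\ell|\lambda_\alpha|^2$. You invert the logic: you establish the equality first (via diagonalization; note that once $\Lambda^\ell\partial f$ is diagonal on the basis $\{e_\lambda\}$ you do not actually need Ky Fan --- expanding ${\bf a}=\sum_\lambda a_\lambda e_\lambda$ and bounding $\sum_\lambda |a_\lambda|^2\prod_{\alpha\in\lambda}|\lambda_\alpha|^2$ by the largest coefficient already gives the bound for all $\ell$-vectors, simple or not, with equality at $e_1\wedge\cdots\wedge e_\ell$), and then obtain the inequality for free by evaluating the defining supremum at the single test vector ${\bf a}_0=\partial/\partial z^1\wedge\cdots\wedge\partial/\partial z^\ell$ and recognizing $\det(A_\ell)$ and $\det(G_\ell)$ as the relevant Gram determinants. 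This last observation is the genuine simplification: it replaces the cited eigenvalue--minor inequality by the tautology that a supremum dominates any of its values, at the cost of first having the equality in hand. Both routes ultimately rest on the same multiplicative Courant--Fischer-type fact, so the difference is one of packaging rather than substance, but your version makes clearer why $W_\ell$ is the natural smooth local barrier for $\|\Lambda^\ell\partial f\|_0^2$, which is exactly how it is used in Section 3.
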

\begin{proof} For the inequality in  (\ref{eq:21}), as in the above proposition we may choose a unitary frame of $T'_{x_0}M$ such that $G=\operatorname{id}$. Then the claimed result is also a well-known statement about the partial products of the descending eigenvalues. The result can be seen by applying   4.1.6 of \cite{MM} to $(A+\epsilon G)^{-1}$ and let $\epsilon \to 0$ (see also Problem 4.3.P15 of \cite{Horn-Johnson}).

For the equality (\ref{eq:21}), first observe that
$$
\|\Lambda^\ell \partial f\|^2_0(x) \ge \frac{| \partial f\left(v_1\right)\wedge \cdots \wedge \partial f\left(v_\ell\right)|^2}{|v_1\wedge\cdots \wedge v_\ell|^2}=\Pi_{\alpha=1}^\ell |\lambda_\alpha|^2
$$
if $\{v_\alpha \}$ are the eigenvectors of $A$ with eigenvalues $\{|\lambda_\alpha|^2\}$. On the other hand for general orthonormal vectors $\{v_\alpha\}$, the above paragraph implies $  \frac{| \partial f\left(v_1\right)\wedge \cdots \wedge \partial f\left(v_\ell\right)|^2}{|v_1\wedge\cdots \wedge v_\ell|^2}\le \Pi_{\alpha=1}^\ell |\lambda_\alpha|^2
$. Combining them we have the equality in  (\ref{eq:21}). \end{proof}

\section{$\partial\bar{\partial}$-Bochner formulae}
Here we generalize the $\partial\bar{\partial}$-Bochner formula derived in \cite{Ni-1807} on $\|\partial f\|^2$ and $\|\Lambda^m \partial f\|_0^2$ to $\sigma_\ell$ and $\|\Lambda^\ell \partial f\|_0^2$. Since both $\sigma_\ell(x)$ and $\|\Lambda^\ell \partial f\|_0^2(x)$ are only continous in general we first derive formula on their barriers supplied by Proposition \ref{prop:21}, \ref{prop:22}.

\begin{proposition}\label{prop:31} Under the normal coordinates near $x_0$ and $f(x_0)$ such that $\partial f(\frac{\partial\ }{\partial z^\alpha})=\lambda_\alpha \delta^i_{\alpha} \frac{\partial\ }{\partial w^i}$ with $|\lambda_1|\ge \cdots\ge |\lambda_\alpha|\ge \cdots \ge |\lambda_m|$  being the singular values of $\partial f: (T'_{x_0}M, g) \to (T_{f(x_0)}'N, h)$, let $U_\ell(x)$ and $W_\ell(x)$ be  the functions defined in the last section in a small neighborhood of $x_0$. Then at $x_0$, for $v\in T'_{x_0}M$, and nonzero $U_\ell$ and $W_\ell$,
\begin{eqnarray}
\langle \sqrt{-1}\partial \bar{\partial} \log U_\ell, \frac{1}{\sqrt{-1}}v\wedge \bar{v}\rangle &=&\frac{U_\ell \sum_{1\le i\le n, 1\le \alpha \le \ell} |f^i_{\alpha v}|^2-|\sum_{\alpha =1}^\ell \overline{\lambda_\alpha}f^\alpha_{\alpha v}|^2}{U_\ell^2}\label{eq:31}\\
&\quad&+\sum_{\alpha=1}^\ell \frac{|\lambda_\alpha|^2}{U_\ell}(-R^N(\alpha, \bar{\alpha}, \partial f(v), \overline{\partial f(v)})+R^M(\alpha, \bar{\alpha}, v, \bar{v}));\nonumber \\
\langle \sqrt{-1}\partial \bar{\partial} \log W_\ell, \frac{1}{\sqrt{-1}}v\wedge \bar{v}\rangle &=& \sum_{\alpha=1}^\ell \sum_{ \ell+1\le i  \le  n} \frac{|f^i_{\alpha v}|^2}{|\lambda_\alpha|^2}\label{eq:32}  \\
&\quad&  + \sum_{\alpha=1}^\ell (- R^N(\alpha, \bar{\alpha}, \partial f(v), \overline{\partial f(v)})+R^M(\alpha,\bar{\alpha}, v, \bar{v})). \nonumber
\end{eqnarray}
\end{proposition}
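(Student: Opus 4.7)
The plan is to work in holomorphic normal coordinates $(z^1,\dots,z^m)$ at $x_0$ and $(w^1,\dots,w^n)$ at $f(x_0)$ adapted to the singular value decomposition, so that $g_{\alpha\bar\beta}(x_0)=\delta_{\alpha\beta}$, $h_{i\bar j}(f(x_0))=\delta_{ij}$, all first derivatives of $g$ and $h$ vanish at the respective base points, and $f^i_\alpha(x_0)=\lambda_\alpha\delta^i_\alpha$. In these coordinates the Hermitian form $A_{\alpha\bar\beta}=f^i_\alpha\overline{f^j_\beta}\,h_{i\bar j}(f)$ is diagonal at $x_0$ with entries $|\lambda_\alpha|^2$. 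Using holomorphicity of $f$ together with the vanishing first derivatives of $h$ and the chain rule, I compute
\[
\partial_\gamma A_{\alpha\bar\beta}|_{x_0}=f^\beta_{\alpha\gamma}\,\overline{\lambda_\beta},\qquad \partial_{\bar\delta}A_{\alpha\bar\beta}|_{x_0}=\lambda_\alpha\,\overline{f^\alpha_{\beta\delta}},
\]
while the mixed second derivative picks up exactly one curvature contribution from $\partial_{\bar l}\partial_k h_{i\bar j}|_{f(x_0)}=-R^N_{i\bar j k\bar l}$:
\[
\partial_{\bar v}\partial_v A_{\alpha\bar\beta}|_{x_0}=\sum_{i=1}^n f^i_{\alpha v}\,\overline{f^i_{\beta v}}\,-\,\lambda_\alpha\overline{\lambda_\beta}\,R^N\bigl(\alpha,\bar\beta,\partial f(v),\overline{\partial f(v)}\bigr).
\]

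For (3.1), I would apply the identity $\partial\bar\partial\log U_\ell=(\partial\bar\partial U_\ell)/U_\ell-(\partial U_\ell\wedge\bar\partial U_\ell)/U_\ell^2$ to $U_\ell=\sum_{\alpha,\beta\le\ell}g^{\alpha\bar\beta}A_{\alpha\bar\beta}$. Since $\partial g^{\alpha\bar\beta}(x_0)=0$ and $A_{\alpha\bar\beta}(x_0)=|\lambda_\alpha|^2\delta_{\alpha\beta}$, the gradient reduces to $\partial_v U_\ell|_{x_0}=\sum_{\alpha\le\ell}\overline{\lambda_\alpha}f^\alpha_{\alpha v}$, and the Hessian splits as $\sum_{\alpha\le\ell}|\lambda_\alpha|^2\,\partial_{\bar v}\partial_v g^{\alpha\bar\alpha}|_{x_0}+\sum_{\alpha\le\ell}\partial_{\bar v}\partial_v A_{\alpha\bar\alpha}|_{x_0}$, with $\partial_{\bar v}\partial_v g^{\alpha\bar\alpha}|_{x_0}=R^M_{\alpha\bar\alpha v\bar v}$ from the K\"ahler normal expansion (differentiating $g^{\alpha\bar\beta}g_{\beta\bar\gamma}=\delta^\alpha_\gamma$ twice). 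Substituting the $A$-Hessian above and placing everything over the common denominator $U_\ell^2$ yields (3.1) termwise.

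For (3.2), I would write $\log W_\ell=\log\det A_\ell-\log\det G_\ell$ and apply Jacobi's formula:
\[
\partial_{\bar v}\partial_v\log\det A_\ell=\tr\bigl(A_\ell^{-1}\partial_{\bar v}\partial_v A_\ell\bigr)-\tr\bigl(A_\ell^{-1}\partial_v A_\ell\,A_\ell^{-1}\partial_{\bar v}A_\ell\bigr).
\]
Since $A_\ell^{-1}$ is diagonal at $x_0$ with entries $1/|\lambda_\alpha|^2$, the first trace evaluates to $\sum_{\alpha\le\ell}\sum_{i=1}^n|f^i_{\alpha v}|^2/|\lambda_\alpha|^2-\sum_{\alpha\le\ell}R^N_{\alpha\bar\alpha}(\partial f(v),\overline{\partial f(v)})$, while the quadratic trace, using the first-derivative formulae in paragraph one, collapses to $\sum_{1\le i,\alpha\le\ell}|f^i_{\alpha v}|^2/|\lambda_\alpha|^2$, precisely the $i\le\ell$ portion of the first sum. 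This cancellation of the diagonal-block piece is the main technical step: after it only the transversal part $\sum_{\alpha\le\ell}\sum_{\ell+1\le i\le n}|f^i_{\alpha v}|^2/|\lambda_\alpha|^2$ survives. Finally, $\partial_{\bar v}\partial_v\log\det G_\ell|_{x_0}=-\sum_{\alpha\le\ell}R^M_{\alpha\bar\alpha v\bar v}$, and subtracting it produces the $+R^M$ term in (3.2). The main obstacle is bookkeeping rather than conceptual---identifying which terms vanish at the base point (chain-rule contributions involving $\partial h$, $\partial g$, and $\partial\bar f$) and spotting the combinatorial cancellation in the Jacobi expansion of $\det A_\ell$---since after the normalization every surviving term is either an explicit derivative of $f$ or a component of $R^M$ or $R^N$.
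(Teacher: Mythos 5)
Your computation is correct and follows essentially the same route as the paper: direct differentiation in holomorphic normal coordinates adapted to the singular value decomposition, with the quotient rule for $\log U_\ell$ and the curvature terms arising from the second derivatives of $g^{\alpha\bar\beta}$ and $h_{i\bar j}$ at the base points. The paper only writes out the details for (3.1) and defers (3.2) to the earlier reference; your Jacobi-formula expansion of $\log\det A_\ell$, including the cancellation of the $i\le\ell$ block between the two traces, correctly supplies that omitted calculation.
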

\begin{proof} The calculation is similar to that of \cite{Ni-1807}. Here we include the details of the first. Choose holomorphic normal coordinate $(z_1, z_2, \cdots, z_m)$ near a point $p$ on the domain manifold $M$,  correspondingly  $(w_1, w_2, \cdots, w_n)$ near $f(p)$ in the target. Let $\omega_g=\sqrt{-1}g_{a\bar{\beta}}dz^\alpha\wedge d\bar{z}^{\beta}$ and $\omega_h=\sqrt{-1}h_{i\bar{j}}dw^i\wedge d\bar{w}^{j}$ be the K\"ahler forms  of $M$ and $N$ respectively. Correspondingly, the Christoffel symbols are given
$$
^M\Gamma_{\alpha \gamma}^\beta =\frac{\partial g_{\alpha \bar{\delta}}}{\partial z^{\gamma}}g^{\bar{\delta}\beta}=\Gamma_{\gamma \alpha }^\beta; \quad \quad  ^N\Gamma_{i k}^j =
\frac{\partial h_{i \bar{l}}}{\partial w^{k}}h^{\bar{l}k}=\Gamma_{k i }^j.
$$
We always uses Einstein's convention when there is an repeated index. The symmetry in the Christoffel symbols is due to K\"ahlerity. If the appearance of the indices can distinguish the manifolds we omit the superscripts $^M$ and $^N$.  Correspondingly the curvatures are given by
$$
^MR^\beta_{\alpha \bar{\delta} \gamma}=-\frac{\partial}{\partial \bar{z}^{\delta}} \Gamma_{\alpha \gamma}^\beta; \quad \quad \quad  \,^NR^j_{i \bar{l} k}=-\frac{\partial}{\partial \bar{w}^{l}} \Gamma_{i k}^j.
$$
At the points $x_0$ and $f(x_0)$, where the normal coordinates are centered we have that
$$
R_{\bar{\beta}\alpha \bar{\delta} \gamma}=-\frac{\partial^2 g_{\bar{\beta}\alpha}}{\partial z^{\gamma}\partial \bar{z}^{\delta}}; \quad \quad R_{\bar{j}i \bar{l} k}=-\frac{\partial^2 h_{\bar{j}i}}{\partial w^{k}\partial \bar{w}^{l}}.
$$
Direct calculation shows that at the point $x_0$ (here repeated indices $\alpha, \beta $ are summed from $1$ to $\ell$, while $i, j, k, l$ are summed from $1$ to $n$)
\begin{eqnarray*}
(\log U_\ell)_\gamma &=&\frac{g^{\alpha \bar{\beta}}_{\quad, \gamma} A_{\alpha\bar{\beta}}+g^{\alpha \bar{\beta}}f^i_{\alpha \gamma}h_{i\bar{j}}\overline{f^j_\beta}+g^{\alpha \bar{\beta}}f^i_{\alpha }\overline{f^j_\beta}f^k_\gamma h_{i\bar{j}, k} }{U_\ell}=\frac{f^i_{\alpha\gamma}\overline{f^i_\alpha}}{U_\ell};\\
(\log U_\ell)_{\bar{\gamma}} &=&\frac{g^{\alpha \bar{\beta}}_{\quad, \bar{\gamma}} A_{\alpha\bar{\beta}}+g^{\alpha \bar{\beta}}f^i_{\alpha}h_{i\bar{j}}\overline{f^j_{\beta\gamma}}+g^{\alpha \bar{\beta}}f^i_{\alpha }\overline{f^j_\beta}\overline{f^k_\gamma} h_{i\bar{j}, \bar{k}} }{U_\ell}=\frac{\overline{f^i_{\alpha\gamma}}f^i_\alpha}{U_\ell};\\
\left(\log U_\ell\right)_{\gamma \bar{\gamma}}&=& \frac{R^M_{\alpha\bar{\beta}\gamma\bar{\gamma}}f^i_\alpha \overline{f^i_\beta}+|f^i_{\alpha \gamma}|^2-R^N_{i\bar{j}k\bar{l}}f^i_\alpha \overline{f^j_\beta}f^k_\gamma \overline{f^l_\gamma}}{U_\ell}-\frac{|\sum_{1\le \alpha\le \ell; 1\le i\le n} f^i_{\alpha \gamma}\overline{f^i_\alpha}|^2}{U_\ell^2}.
\end{eqnarray*}
The claimed equation then follows.
\end{proof}

\begin{corollary} Let $f: M\to N$ be a holomorphic map between two K\"ahler manifolds.

(i) If the bisectional curvature of $N$ is non-positive and the bisectional curvature of $M$ is nonnegative, then $\log \sigma_\ell(x)$ is a plurisubharmonic function.

(ii) Assume that $\Ric^N_\ell\le 0$ and $\Ric^M_\ell\ge0$. If $\|\Lambda^\ell \partial f\|^2_0$ not identically zero, then for every $x$, there exists a $\Sigma \subset T_x'M$ with $\dim(\Sigma)\ge \ell$ such that $\log \|\Lambda^\ell \partial f\|^2_0(x)$ is plurisubharmonic on $\Sigma$.
\end{corollary}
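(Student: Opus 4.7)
The plan is to apply Proposition \ref{prop:31} at an arbitrary base point $x_0\in M$, with holomorphic normal coordinates chosen so that $\partial f$ is put in singular-value-decomposed form at $x_0$, and then combine the pointwise identities (\ref{eq:31})--(\ref{eq:32}) with the barrier inequalities of Propositions \ref{prop:21}--\ref{prop:22}. Since $\sigma_\ell$ and $\|\Lambda^\ell\partial f\|^2_0$ are only continuous in general, I will conclude plurisubharmonicity by a standard viscosity/maximum-principle argument: for any $C^2$ function $\varphi$ touching $\log\sigma_\ell$ (resp.\ $\log\|\Lambda^\ell\partial f\|^2_0$) from above at $x_0$, the inequalities $\varphi\ge\log\sigma_\ell\ge\log U_\ell$ (resp.\ $\varphi\ge\log\|\Lambda^\ell\partial f\|^2_0\ge\log W_\ell$) hold in a neighborhood with equality at $x_0$, so $\varphi-\log U_\ell$ attains a local minimum at $x_0$ and its $\sqrt{-1}\partial\bar\partial$ in the direction $v\wedge\bar v$ is bounded below at $x_0$ by that of $\log U_\ell$. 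It therefore suffices to verify that the right-hand sides of (\ref{eq:31}) and (\ref{eq:32}) are nonnegative at $x_0$ under the respective curvature hypotheses.

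For (i), I deal with the two summands of (\ref{eq:31}) separately. The first summand is the ``gradient'' term
\[
\frac{U_\ell\sum_{i,\alpha}|f^i_{\alpha v}|^2-\bigl|\sum_{\alpha=1}^\ell\overline{\lambda_\alpha}f^\alpha_{\alpha v}\bigr|^2}{U_\ell^2},
\]
which is $\ge 0$ by Cauchy--Schwarz:
\[
\Bigl|\sum_{\alpha=1}^\ell\overline{\lambda_\alpha}f^\alpha_{\alpha v}\Bigr|^2\le\Bigl(\sum_{\alpha=1}^\ell|\lambda_\alpha|^2\Bigr)\Bigl(\sum_{\alpha=1}^\ell|f^\alpha_{\alpha v}|^2\Bigr)=U_\ell\sum_{\alpha=1}^\ell|f^\alpha_{\alpha v}|^2\le U_\ell\sum_{1\le i\le n,\,1\le\alpha\le\ell}|f^i_{\alpha v}|^2.
\]
The second summand is term-by-term nonnegative: each coefficient $|\lambda_\alpha|^2/U_\ell\ge 0$, each $-R^N(\alpha,\bar\alpha,\partial f(v),\overline{\partial f(v)})\ge 0$ since the bisectional curvature of $N$ is nonpositive, and each $R^M(\alpha,\bar\alpha,v,\bar v)\ge 0$ since that of $M$ is nonnegative. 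This proves (i).

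For (ii), I work at points where $\|\Lambda^\ell\partial f\|^2_0(x_0)>0$, so that $W_\ell>0$ in a neighborhood and $\log W_\ell$ is a legitimate barrier. In (\ref{eq:32}) the first sum $\sum_{\alpha,\,i>\ell}|f^i_{\alpha v}|^2/|\lambda_\alpha|^2$ is manifestly nonnegative, while the curvature sum rewrites as
\[
\Ric^M\!\bigl(x_0,\Sigma_M\bigr)(v,\bar v)-\Ric^N\!\bigl(f(x_0),\Sigma_N\bigr)\bigl(\partial f(v),\overline{\partial f(v)}\bigr),
\]
where $\Sigma_M\subset T'_{x_0}M$ is the $\ell$-dimensional subspace spanned by the first $\ell$ eigenvectors of $A$ and $\Sigma_N\subset T'_{f(x_0)}N$ is its image under $\partial f$ (also of dimension $\ell$ since $|\lambda_1|,\dots,|\lambda_\ell|\ne 0$). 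Both pieces are $\ge 0$ by the hypotheses $\Ric^M_\ell\ge 0$ and $\Ric^N_\ell\le 0$, valid for every vector $v\in T_{x_0}'M$. Hence the viscosity argument gives plurisubharmonicity of $\log\|\Lambda^\ell\partial f\|^2_0$ on all of $T_{x_0}'M$, and one may take $\Sigma=T_{x_0}'M$, whose dimension $m\ge\ell$.

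The only real subtlety is the viscosity reduction in the presence of the continuous (non-$C^2$) quantities $\sigma_\ell$ and $\|\Lambda^\ell\partial f\|^2_0$; the algebraic Cauchy--Schwarz step and the curvature sign discussion are otherwise routine, and nothing beyond Propositions \ref{prop:21}, \ref{prop:22}, and \ref{prop:31} is needed.
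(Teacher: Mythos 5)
Your part (i) is correct and is exactly the intended argument: the first (``gradient'') term in (\ref{eq:31}) is nonnegative by the Cauchy--Schwarz inequality you wrote, and under the bisectional curvature hypotheses each curvature summand $-R^N(\alpha,\bar\alpha,\partial f(v),\overline{\partial f(v)})$ and $R^M(\alpha,\bar\alpha,v,\bar v)$ is nonnegative for \emph{every} $v\in T_{x_0}'M$, so the smooth lower barrier $\log U_\ell$ supplied by Proposition \ref{prop:21} yields plurisubharmonicity of $\log\sigma_\ell$ in the viscosity sense. The barrier/viscosity reduction itself is also the right mechanism for part (ii).

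Part (ii), however, over-claims, and the error is a misreading of the definition of $\Ric_\ell$. In this paper $\Ric(x,\Sigma)$ is the Ricci form of the curvature tensor \emph{restricted} to $\Sigma$: it is a Hermitian form defined, and sign-constrained by the hypothesis on $\Ric_\ell$, only for test vectors $v\in\Sigma$, not for arbitrary $v\in T_x'M$. Hence $\Ric^M_\ell\ge 0$ gives $\sum_{\alpha=1}^\ell R^M(\alpha,\bar\alpha,v,\bar v)\ge 0$ only for $v\in\Sigma_M=\operatorname{span}\{\partial/\partial z^1,\dots,\partial/\partial z^\ell\}$, and likewise $\Ric^N_\ell\le 0$ controls $\sum_{\alpha=1}^\ell R^N(\alpha,\bar\alpha,Y,\overline{Y})$ only for $Y\in\Sigma_N=\operatorname{span}\{\partial/\partial w^1,\dots,\partial/\partial w^\ell\}$. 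For $v\notin\Sigma_M$ neither sum has a determined sign, so your assertion that the curvature part of (\ref{eq:32}) is nonnegative ``for every vector $v\in T_{x_0}'M$'' is unjustified, and with it the conclusion $\Sigma=T_{x_0}'M$. This is precisely why the corollary claims plurisubharmonicity only on a subspace of dimension at least $\ell$: at a point where $W_\ell\ne 0$ one takes $\Sigma=\Sigma_M$, notes that $\partial f(v)\in\Sigma_N$ for $v\in\Sigma_M$ and that $\dim\Sigma_N=\ell$ because $\lambda_1,\dots,\lambda_\ell\ne 0$, and then both curvature sums in (\ref{eq:32}) have the right sign for those $v$, while the first sum is manifestly nonnegative. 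Everything else in your argument (the barrier $\log W_\ell\le\log\|\Lambda^\ell\partial f\|_0^2$ from Proposition \ref{prop:22} with equality at $x_0$, and the restriction to points where the quantity is positive) is fine; only the final strengthening must be retracted.
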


\section{Proof of Theorem \ref{thm:main1}}
Since in general $\sigma_\ell$ and $\|\Lambda^\ell \partial f\|_0$ are not smooth we adopt the viscosity consideration as in Section 5 of \cite{Ni-1807} to prove the result. We also need to modify the algebraic argument in the Appendix of \cite{Ni-1807} for some point-wise estimates needed. Another difference of the argument is that we shall apply the maximum principle to a degenerate operator. First we need a Royden type lemma.

\begin{lemma}\label{lem:41} If the holomorphic sectional curvature $R^N$ has a upper bound $-\kappa$,  with respect
to the normal coordinates as in Proposition \ref{prop:21} at $x_0$ (and $f(x_0)$),
$$
\sum_{1\le \alpha,\beta, \gamma, \delta\le \ell} g^{\alpha \bar{\beta}}g^{\gamma \bar{\delta}}R^N_{i\bar{j}k\bar{l}}f^i_{\alpha} \overline{f^{j}_\beta} f^k_\gamma \overline{f^l_\delta} \le -\frac{\ell'+1}{2\ell'} \kappa U^2_\ell, \mbox{ when }\kappa>0; \quad  \le -\kappa U^2_\ell \mbox{ when } \kappa\le 0.
$$
Here $\ell'=\min\{ \ell, \dim(f(M))\}$.
\end{lemma}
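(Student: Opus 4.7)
The plan is to reduce the inequality to a torus phase–averaging argument of Royden type, finishing with an elementary Cauchy--Schwarz whose constant is tuned to the dimension $\ell'$ of the image of the first $\ell$ coordinate directions. In the chosen frame one has $g^{\alpha\bar\beta}(x_0)=\delta^{\alpha\beta}$ and $f^i_\alpha(x_0)=\lambda_\alpha\delta^i_\alpha$, so the left-hand side collapses at $x_0$ to
$$
\sum_{\alpha,\gamma=1}^{\ell}R^N_{\alpha\bar\alpha\gamma\bar\gamma}\,|\lambda_\alpha|^2|\lambda_\gamma|^2.
$$
Setting $Y_\alpha:=\lambda_\alpha\,\partial/\partial w^\alpha\in T'_{f(x_0)}N$ produces $\ell$ mutually $h$-orthogonal vectors with $|Y_\alpha|^2=|\lambda_\alpha|^2$ and $\sum_\alpha|Y_\alpha|^2=U_\ell$. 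Because $|\lambda_1|\ge\cdots\ge|\lambda_m|$, exactly $\ell'=\min\{\ell,\dim f(M)\}$ of them are nonzero, which is precisely where the constant $\ell'$ will enter.

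For $\theta=(\theta_1,\ldots,\theta_\ell)\in[0,2\pi]^\ell$, set $Z(\theta):=\sum_\alpha e^{\i\theta_\alpha}Y_\alpha$ and average the scalar function $R^N(Z,\bar Z,Z,\bar Z)$ over the torus. Only index configurations with $\{\alpha,\gamma\}=\{\beta,\delta\}$ as multisets survive the integration; invoking the K\"ahler symmetry $R^N(Y_\alpha,\overline{Y_\gamma},Y_\gamma,\overline{Y_\alpha})=R^N(Y_\alpha,\overline{Y_\alpha},Y_\gamma,\overline{Y_\gamma})$ and subtracting the diagonal $\alpha=\beta=\gamma=\delta$ that is counted twice, the average equals
$$
2\sum_{\alpha,\gamma=1}^{\ell}R^N(Y_\alpha,\overline{Y_\alpha},Y_\gamma,\overline{Y_\gamma})-\sum_{\alpha=1}^{\ell}R^N(Y_\alpha,\overline{Y_\alpha},Y_\alpha,\overline{Y_\alpha}).
$$
Orthogonality of the $Y_\alpha$'s forces $|Z(\theta)|^2=U_\ell$ to be independent of $\theta$, so the hypothesis $H^N\le-\kappa$ gives the pointwise (hence integrated) bound $R^N(Z,\bar Z,Z,\bar Z)\le-\kappa U_\ell^2$. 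Coupled with the diagonal bound $R^N(Y_\alpha,\overline{Y_\alpha},Y_\alpha,\overline{Y_\alpha})\le-\kappa|\lambda_\alpha|^4$, this yields
$$
2\sum_{\alpha,\gamma}R^N_{\alpha\bar\alpha\gamma\bar\gamma}\,|\lambda_\alpha|^2|\lambda_\gamma|^2\le -\kappa U_\ell^2-\kappa\sum_\alpha|\lambda_\alpha|^4.
$$

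To finish I would split by the sign of $\kappa$. If $\kappa>0$, Cauchy--Schwarz on the $\ell'$ nonzero indices yields $\sum_\alpha|\lambda_\alpha|^4\ge U_\ell^2/\ell'$, which after dividing by $2$ produces the claimed $-\tfrac{\ell'+1}{2\ell'}\kappa U_\ell^2$. If $\kappa\le0$, the trivial bound $\sum_\alpha|\lambda_\alpha|^4\le U_\ell^2$ together with $-\kappa\ge0$ gives $\le -\kappa U_\ell^2$. The main subtle point is the combinatorial bookkeeping in the torus average, especially identifying the overcounted diagonal after the pairing, together with the recognition that the sharp factor $(\ell'+1)/(2\ell')$ must be computed with $\ell'$ instead of $\ell$ so that the estimate remains sharp when $\dim f(M)<\ell$; once these points are set up, the remaining manipulations are elementary.
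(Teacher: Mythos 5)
Your argument is correct and is essentially the paper's (Royden/Zheng) proof: the only cosmetic difference is that you average $R^N(Z,\bar Z,Z,\bar Z)$ over a torus of phases rather than over the unit sphere $\mathbb{S}^{2\ell'-1}$, but both averages yield the identical intermediate bound $\sum_{\alpha,\gamma}R^N_{\alpha\bar\alpha\gamma\bar\gamma}|\lambda_\alpha|^2|\lambda_\gamma|^2\le-\tfrac{\kappa}{2}\bigl(U_\ell^2+\sum_\alpha|\lambda_\alpha|^4\bigr)$, after which both cases follow from the same elementary inequalities $U_\ell^2/\ell'\le\sum_\alpha|\lambda_\alpha|^4\le U_\ell^2$. (A harmless imprecision: the number of nonzero $\lambda_\alpha$ with $\alpha\le\ell$ is at most, not exactly, $\ell'$, since the pointwise rank can drop below the generic rank $\dim f(M)$; this only strengthens the Cauchy--Schwarz step.)
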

\begin{proof} We follow the argument in Appendix of \cite{Ni-1807}, which is due to F. Zheng. The left hand side can be written as $ \sum_{1\le \alpha, \beta\le \ell'} R^N_{\alpha\bar{\alpha}\beta\bar{\beta}}|\lambda_\alpha|^2|\lambda_\beta|^2$. In the space $$\Sigma\doteqdot \operatorname{span} \{ \partial f\left( \frac{\partial\ }{\partial z^1}\right), \cdots, \partial f\left( \frac{\partial\ }{\partial z^{\ell'}}\right)\}, $$  consider the vector $Y=\sum_{1\le i \le \ell'} w^i\lambda_i \frac{\partial \ }{\partial w^i}$ with $(w^1, \cdots, w^{\ell'})\in \mathbb{S}^{2\ell'-1}\subset \Sigma$. Then  direct calculations show that
\begin{eqnarray*}
\sum_{1\le \alpha, \beta\le \ell'} R^N_{\alpha\bar{\alpha}\beta\bar{\beta}}|\lambda_\alpha|^2|\lambda_\beta|^2&=&\frac{\ell' (\ell'+1)}{2}\cdot \frac{1}{Vol (\mathbb{S}^{2\ell'-1})}\int_{\mathbb{S}^{2\ell'-1}} R^N(Y, \overline{Y}, Y,\overline{Y})\\
&\le & -\kappa \frac{\ell' (\ell'+1)}{2}\cdot \frac{1}{Vol (\mathbb{S}^{2\ell'-1})}\int_{\mathbb{S}^{2\ell'-1}} |Y|^4\\
&=& \frac{-\kappa }{2}\left(U_\ell^2+\sum_{1\le \alpha \le \ell'} |\lambda_\alpha|^4\right).
\end{eqnarray*}
The result follows from elementary inequalities $\sum_{1\le \alpha \le \ell'} |\lambda_\alpha|^4\le U_\ell^2 \le\ell'\, \sum_{1\le \alpha \le \ell'} |\lambda_\alpha|^4$.
\end{proof}

To prove part (i), let $\eta(t):[0, +\infty)\to [0, 1]$ be a function supported in $[0, 1]$ with $\eta'=0$ on $[0, \frac{1}{2}]$, $\eta' \le 0$, $\frac{|\eta'|^2}{\eta}+(-\eta'')\le C_1$. The construction of such $\eta$ is elementary.
Let $\varphi_R(x)=\eta(\frac{r(x)}{R})$.  When the meaning is clear we omit subscript $R$ in $\varphi_R$. Clearly $\sigma_\ell \cdot \varphi$ attains a maximum somewhere at $x_0$ in $B_p(R)$. With respect to the normal coordinates near $x_0$ and $f(x_0)$, $(U_\ell \varphi)(x_0)=(\sigma \varphi)(x_0)$, and $(U_\ell\varphi)(x)\le (\sigma_\ell  \varphi)(x)\le (\sigma\varphi)(x_0)\le (U_\ell \varphi)(x_0)$ for $x$ in the small normal neighborhood. The maximum principle then implies that at $x_0$
$$
\nabla (U_\ell \varphi)=0; \sum_{1\le \alpha \le \ell} \frac{1}{2}(\nabla_{\alpha}\nabla_{\bar{\alpha}} +\nabla_{\bar{\alpha}}\nabla_{\alpha}) \log (U_\ell \varphi) \le 0.
$$
Now applying the $\partial \bar{\partial}$ formula (\ref{eq:31}), the above Lemma and the complex Hessian comparison theorem of Li-Wang \cite{LW}, together with the argument in \cite{Ni-1807}, imply the result. It is clear from the proof that if $\ell=m=\dim(M)$, only the Laplacian comparison theorem is needed. Hence one only needs to assume  that the Ricci curvature of $M$ is bounded from below.

The proof of part (ii) is similar. For the sake of the completeness we include the argument under the assumption (\ref{eq:2}). In this case we let $\varphi=\eta\left(\frac{\rho}{R}\right)$. Now $\varphi$ has support in $D(2R)\doteqdot \{\rho\le 2R\}$. Hence $W_\ell \cdot \varphi$ attains its maximum somewhere, say at $x_0 \in D(2R)$. Now at $x_0$ we have \begin{eqnarray*}
0&\ge& \sum_{\gamma=1}^\ell \frac{\partial^2}{\partial z^\gamma \partial z^{\bar{\gamma}}}\, \left(\log (W_\ell \, \varphi)\right) \ge \sum_{\alpha,\gamma=1}^\ell R^M_{\alpha\bar{\alpha}\gamma \bar{\gamma}}-R^N_{\alpha\bar{\alpha }\gamma\bar{\gamma}}|\lambda_\gamma|^2 + \sum_{\gamma=1}^\ell \frac{\partial^2 \log \varphi}{\partial z^\gamma \partial z^{\bar{\gamma}}} \\
&\ge& -K +\ell \cdot \kappa \cdot W_\ell^{1/\ell}+\frac{ \eta''}{R^2\varphi} |\nabla \rho|^2+\frac{\ell \eta'}{R \varphi}\left([\sqrt{-1}\partial \bar{\partial} \rho]_{+}\right)-\frac{|\eta'|^2}{\varphi^2 R^2}\cdot |\nabla \rho|^2\\
&\ge&  -K +\ell \cdot \kappa \cdot W_\ell^{1/\ell} -\frac{C_1}{\varphi R^2}|\nabla \rho|^2 -\frac{C_1}{\varphi R} \cdot C(m)\left( [\sqrt{-1}\partial \bar{\partial} \rho]_{+}\right).
\end{eqnarray*}
Multiplying $\varphi$ on both sides of the above we have that
$$
\sup_{D(R)}\|\Lambda^\ell \partial f\|_0^2(x)\le \left(\frac{K+ +\frac{C_1}{\varphi R^2}|\nabla \rho|^2 +\frac{C_1}{\varphi R} \cdot C(m)\left( [\sqrt{-1}\partial \bar{\partial} \rho]_{+}\right)}{\ell \kappa}\right)^\ell.
$$
The result follows by observing that $\frac{|\nabla \rho|^2}{R^2}\le \frac{4|\nabla \rho|^2}{\rho^2} \to 0$ and $\frac{ [\sqrt{-1}\partial \bar{\partial} \rho]_{+}}{R}\le 2 \frac{ [\sqrt{-1}\partial \bar{\partial} \rho]_{+}}{\rho}\to 0$ as $R\to \infty$.

\section{Applications}
First we show that the Pogorelov type estimate  of  \cite{Ni-1807}  can be adapted to derive the $C^2$-estimate for the Monge-Amp\`ere equation related to the existence of K\"ahler-Einstein metrics and prescribing the Ricci curvature problem. Recall that the geometric problems reduce to a complex Monge-Amp\`ere equation
$$
\frac{\det(g_{\alpha\bar{\beta}}+\varphi_{\alpha\bar{\beta}})}{\det(g_{\alpha\bar{\beta}})}=e^{t\varphi +f}
$$
with $t\in [-1, 1]$, $f$ being a fixed function with prescribed complex Hessian. $g'_{\alpha\bar{\beta}}=g_{\alpha\bar{\beta}}+\varphi_{\alpha\bar{\beta}}$ is another K\"ahler metric with $[\omega_{g'}]=[\omega_g]$. We apply our previous setting to the map $\operatorname{id}:(M, g)\to (M, g')$. The computation in \cite{Aub, Yau} (See also the exposition in \cite{Siu}) is on $\mathcal{L} \|\partial f\|^2$. By the computation from Section 3 and 4, at the point where $\|\partial \operatorname{id}\|^2_0$ is attained we have that
$$
0\ge \frac{\partial^2}{\partial z^\gamma \partial \bar{z}^\gamma} \log (1+\varphi_{1\bar{1}})\ge R_{1\bar{1}\gamma\bar{\gamma}}-R'_{1\bar{1}\gamma \bar{\gamma}}\left(1+\varphi_{\gamma\bar{\gamma}}\right).
$$
Here $R'$ is the curvature of $g'$ and $|\lambda_\gamma|^2=1+\varphi_{\gamma\bar{\gamma}}$. Since we do not have information on $R'$ in general, but only $\Ric^{g'}(\frac{\partial}{\partial z^1}, \frac{\partial}{\partial \bar{z}^1})=\Ric^g_{1\bar{1}}-t\varphi_{1\bar{1}}-f_{1\bar{1}}$, we multiply $\frac{1}{1+\varphi_{\gamma\bar{\gamma}}}$ on the both sides of the above inequality and then sum $\gamma$ from $1$ to $m$ arriving at
\begin{eqnarray*}
0&\ge& \sum_{\gamma=1}^m\frac{1}{1+\varphi_{\gamma\bar{\gamma}}}R^g_{1\bar{1}\gamma\bar{\gamma}}-
\frac{\Ric^g_{1\bar{1}}}{1+\varphi_{1\bar{1}}}
+t\frac{\varphi_{1\bar{1}}}{1+\varphi_{1\bar{1}}}+\frac{f_{1\bar{1}}}{1+\varphi_{1\bar{1}}}\\
&\ge& -C(M, g, f)\sum_{\gamma=1}^m\frac{1}{1+\varphi_{\gamma\bar{\gamma}}}-1.
\end{eqnarray*}
 Now we apply/repeat the same consideration/calculation to $Q\doteqdot \log \sigma_1 -(C(M, g, f)+1)\varphi$.  Then  at the point $x_0$, where $Q$ attains its maximum,  we have that
$$
0\ge -C(M, g, f)\sum_{\gamma=1}^m\frac{1}{1+\varphi_{\gamma\bar{\gamma}}}-(C(M, g, f)+2) +(C(M, g, f)+1)\sum_{\gamma=1}^m\frac{1}{1+\varphi_{\gamma\bar{\gamma}}},
$$
which then implies that
$$
\sum_{\gamma=1}^m\frac{1}{1+\varphi_{\gamma\bar{\gamma}}}\le C(M, g, f)+2.
$$
This implies that at the maximum point of $\sigma_1 e^{-(C(M, g, f)+1)\varphi}$,
\begin{eqnarray*}
\sigma_1 e^{-(C(M, g, f)+1)\varphi}&=&\sigma_1\frac{\omega^m_g}{\omega^m_{g'}}e^{t\varphi +f}e^{-(C(M, g, f)+1)\varphi}\\
&\le& \left(\frac{1}{m-1} \sum_{\gamma=2}^m \frac{1}{1+\varphi_{\gamma\bar{\gamma}}}\right)^{m-1} e^{t\varphi +f}e^{-(C(M, g, f)+1)\varphi}\\
&\le& \left(\frac{C(M, g, f)+2}{m-1} \right)^{m-1} e^{t\varphi +f}e^{-(C(M, g, f)+1)\varphi}.
\end{eqnarray*}
If we write $K=\left(\frac{C(M, g, f)+2}{m-1} \right)^{m-1}$, $\kappa=C(M, g, f)+2$, the above implies
\begin{equation}
1+\varphi_{\gamma\bar{\gamma}}(x)\le \sigma_1(x)\le Ke^{\kappa (\varphi(x)-\varphi(x_0))} e^{t\varphi(x_0) +f(x_0)},  \quad \forall \gamma\in \{1, \cdots, m\}.
\end{equation}

As mentioned in the introduction, Theorem \ref{thm:main1} removes the constrains that $\dim(M)\le \dim(N)$ in the previous results proved in \cite{Ni-1807}. As in \cite{NZ} we  denote by $B^\perp$  the orthogonal bisectional curvature. We say $B^\perp\le \kappa$ if for any $X, Y\in T'N$ with $\langle X, \overline{Y}\rangle=0$, $R(X, \bar{X}, Y, \bar{Y})\le \kappa |X|^2|Y|^2$. The following is a corollary of the proof Theorem \ref{thm:main1}.

\begin{theorem}\label{thm:51} Let $f: (M, g)\to (N, h)$ be a holomorphic map.

(i) Assume that $M$ is compact. Under the assumptions either  $\Ric^M_\ell >0$, and the holomorphic sectional curvature $H^N\le 0$, or $\Ric^M_\ell\ge 0$ and $H^N < 0$,   $f$ must be constant.  The same result also holds if $(B^M)^\perp>0$ and $(B^N)^\perp\le 0$ or $(B^M)^\perp\ge0$ and  $(B^N)^\perp< 0$.

(ii) If $M$ is compact with $S^M_\ell\ge 0$ and $\Ric^N_\ell <0$, or  $S^M_\ell >0$, $\Ric^N_\ell \le 0$ then $\dim(f(M))< \ell$. The same result holds if $\Ric^M_\ell\ge 0$ and $S^N_\ell <0$, or $\Ric^M_\ell> 0$ and $S^N_\ell \le 0$.
\end{theorem}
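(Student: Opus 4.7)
The strategy is to carry out the maximum-principle argument underlying Theorem \ref{thm:main1} on a compact manifold, where the cutoff $\varphi$ is unnecessary and strict curvature hypotheses replace the bounds $K=0$ or $\kappa=0$. In each case I argue by contradiction: assume the conclusion fails, so that $\sigma_\ell$ (in part (i)) or $\|\Lambda^\ell \partial f\|_0^2$ (in part (ii)) attains a positive maximum at some $x_0 \in M$; then evaluate the $\partial\bar{\partial}$-Bochner formula (\ref{eq:31}) or (\ref{eq:32}) on the smooth barrier $U_\ell$ or $W_\ell$, which by Proposition \ref{prop:21} or \ref{prop:22} achieves a local maximum at $x_0$ as well. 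The gradient-type terms in these formulas are manifestly nonnegative, so any strict sign imbalance in the remaining curvature terms alone contradicts the maximum principle.

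For part (i), the case $\Ric^M_\ell \ge 0$, $H^N < 0$ is Theorem \ref{thm:main1}(i) with $K=0$, forcing $\sigma_\ell \equiv 0$ and hence $f$ constant. The case $\Ric^M_\ell > 0$, $H^N \le 0$ is handled by summing (\ref{eq:31}) over $v=e_\alpha$, $\alpha = 1,\ldots,\ell$, exactly as in the proof of Theorem \ref{thm:main1}: Lemma \ref{lem:41} with $\kappa = 0$ bounds the $R^N$-contribution above by zero, while strict positivity of $\Ric^M_\ell$ on the $\ell$-plane $\Sigma_M = \operatorname{span}(e_1,\ldots,e_\ell)$ gives a strictly positive $R^M$-contribution, contradicting $\sum_\alpha \partial_\alpha \partial_{\bar\alpha} \log U_\ell(x_0) \le 0$. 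For the orthogonal bisectional variant I take $\ell = 1$ and test (\ref{eq:31}) in a single direction $v = e_\gamma$ with $\gamma \ne 1$ at a positive maximum of $\sigma_1$: since $e_\gamma \perp e_1$ in $T'M$ and $\partial f(e_\gamma) \perp \partial f(e_1)$ in $T'N$, only the orthogonal bisectional terms $(B^M)^\perp(e_1, e_\gamma)$ and $|\lambda_\gamma|^2 (B^N)^\perp(\partial/\partial w^1, \partial/\partial w^\gamma)$ survive—no holomorphic sectional contributions appear—and the strict hypotheses yield a contradiction with $(\log U_1)_{\gamma \bar\gamma}(x_0) \le 0$.

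For part (ii), suppose $\dim f(M) \ge \ell$ so that $\|\Lambda^\ell \partial f\|_0^2$ attains a positive maximum at some $x_0$. The pair ($S^M_\ell$, $\Ric^N_\ell$) is handled as in Theorem \ref{thm:main1}(ii) by summing (\ref{eq:32}) over $v = e_\alpha$, $\alpha = 1,\ldots,\ell$: the domain curvature sum becomes exactly $S^M(\Sigma_M)$ and the target sum becomes $\sum_\alpha |\lambda_\alpha|^2 \Ric^N(\Sigma_N)(e_\alpha,\bar e_\alpha)$, producing the needed strict sign imbalance. For the dual pair ($\Ric^M_\ell$, $S^N_\ell$), I instead test (\ref{eq:32}) on the rescaled vectors $v_\gamma = e_\gamma/\lambda_\gamma$, which are well defined at $x_0$ precisely because $W_\ell(x_0) > 0$; since $\partial f(v_\gamma) = \partial/\partial w^\gamma$ is then a unit vector in $T'N$, summing over $\gamma = 1,\ldots,\ell$ converts the target curvature contribution into the scalar $-S^N(\Sigma_N)$ and the domain contribution into the combination $\sum_\gamma |\lambda_\gamma|^{-2} \Ric^M(\Sigma_M)(e_\gamma,\bar e_\gamma)$, and the strict hypotheses again deliver the contradiction.

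The main obstacle is the orthogonal bisectional case of part (i) when $\partial f$ has rank one at the maximum $x_0$: then $|\lambda_\gamma| = 0$ for all $\gamma > 1$, the $(B^N)^\perp$-term drops out, and under the hypothesis ``$(B^M)^\perp \ge 0$ and $(B^N)^\perp < 0$'' the remaining inequality fails to be strict. Resolving this degenerate case requires an auxiliary argument—for instance, factoring $f$ through the image curve and invoking a Schwarz-type estimate there, or passing to a different barrier (such as $\|\partial f\|^2$) so that the bisectional terms of $N$ are activated by at least one direction—to close the gap.
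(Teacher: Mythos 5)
Your argument is essentially the paper's own proof. For part (i) and the first half of part (ii) you sum the barrier inequalities (\ref{eq:31}), (\ref{eq:32}) over $v=e_\gamma$, $\gamma=1,\dots,\ell$, at an interior maximum of $U_\ell$ resp.\ $W_\ell$ and invoke Lemma \ref{lem:41}, exactly as in Section 5; and your ``rescaled vectors'' $v_\gamma=e_\gamma/\lambda_\gamma$ for the dual pair $(\Ric^M_\ell, S^N_\ell)$ are precisely the paper's degenerate operator $\mathcal{L}_\ell=\sum_\gamma\frac{1}{2|\lambda_\gamma|^2}(\nabla_\gamma\nabla_{\bar\gamma}+\nabla_{\bar\gamma}\nabla_\gamma)$, well defined near $x_0$ because $W_\ell(x_0)\ne 0$.

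The one point where you diverge is that you flag the $(B^M)^\perp\ge 0$, $(B^N)^\perp<0$ case when $\partial f$ has rank one at the maximum of $\sigma_1$: there $\lambda_\gamma=0$ for $\gamma\ge 2$, the target curvature term $-|\lambda_\gamma|^2 (B^N)^\perp$ vanishes, and no strict inequality survives. This is a legitimate observation, not a defect of your write-up relative to the paper: the paper disposes of the orthogonal bisectional case with the single sentence ``For the case concerning $B^\perp$ the proof is similar'' and does not address this degeneracy either (for the combination $(B^M)^\perp>0$, $(B^N)^\perp\le 0$ the domain term alone already gives the strict contradiction, so only the other combination is affected). Your suggested remedies (factoring through the one-dimensional image, or switching barriers) are reasonable directions for closing it, but neither you nor the paper carries this out.
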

\begin{proof} Since $M$ is compact $\sigma_\ell$ attains a maximum somewhere, say at $x_0$. If $f$ is not constant, $\sigma_\ell(x_0)>0$. Applying (\ref{eq:31}), using the normal coordinates around $x_0$ and $f(x_0)$ specified as in the last two sections we have that
$$
0\ge \sum_{\gamma=1}^\ell \frac{\partial^2\ }{\partial z^\gamma, \partial \bar{z}^\gamma}\left(\log U_\ell\right)\ge \sum_{1\le \alpha, \gamma \le \ell} \frac{-R^N_{\alpha\bar{\alpha}\gamma\bar{\gamma}}|\lambda_\alpha|^2|\lambda_\gamma|^2}{U_\ell}+ \sum_{\alpha =1}^\ell \frac{\Ric^M(x_0, \Sigma)(\alpha, \bar{\alpha})|\lambda_\alpha|^2}{U_\ell}.
$$
Here $\Sigma=\operatorname{span}\{ \frac{\partial\ }{\partial z^1}, \cdots, \frac{\partial\ }{\partial z^\ell} \}$. By Lemma \ref{lem:41}, if $H^N<0$, the first term is positive, the second one is nonnegative since $\Ric^M_\ell\ge 0$. Hence a contradiction. From the proof, the same holds if $H^N\le0$ and $\Ric^M_\ell>0$. For the case concerning $B^\perp$ the proof is similar.

For (ii), if $\operatorname{rank}(f)\ge  \ell$, $\|\Lambda^\ell\partial f\|_0$ has a nonzero maximum somewhere, say at $x_0$. Then applying (\ref{eq:32}), using the normal coordinates around $x_0$ and $f(x_0)$ specified as in the last two sections we have that
$$
0\ge \sum_{\gamma=1}^\ell \frac{\partial^2\ }{\partial z^\gamma, \partial \bar{z}^\gamma}\left(\log W_\ell\right)\ge \sum_{1\le  \gamma \le \ell}  (-\Ric^N_\ell(x_0, \Sigma) |\lambda_\gamma|^2)+\operatorname{Scal}^M(x_0, \Sigma).
$$
This leads to a contradiction under the assumptions either $S^M_\ell\ge 0$ and $\Ric^N_\ell <0$, or $S^M_\ell >0$, $\Ric^N_\ell \le 0$. For the second part, we introduce the operator:
$$
\mathcal{L}_\ell=\sum_{\gamma=1}^\ell\frac{1}{2|\lambda_{\gamma}|^2}\left(\nabla_{\gamma}\nabla_{\bar{\gamma}}
+\nabla_{\bar{\gamma}}\nabla_{\gamma}\right).
$$
Since at $x_0$ $\W_\ell\ne 0$, the above operator is well defined in a small neighborhood of $x_0$. As before applying $\mathcal{L}$ at $x_0$ implies that
$$
0\ge \mathcal{L}_\ell \left(\log W_\ell\right)\ge  (-\operatorname{Scal}^N(x_0, \partial f(\Sigma))+\sum_{1\le \gamma \le \ell} \frac{\Ric^M(x_0, \Sigma)(\gamma, \bar{\gamma})}{|\lambda_\gamma|^2}.
$$
The above also  induces a contradiction under either $\Ric^M_\ell\ge 0$ and $S^N_\ell <0$, or $\Ric^M_\ell> 0$ and $S^N_\ell \le 0$.
\end{proof}

This can be combined with the following result of Siu-Beauville (cf. Theorem 1.5 of \cite{ABCKT}) to infer information regarding the fundamental group of the manifolds with $\Ric_\ell\ge0$.

\begin{theorem}[Siu-Beauville] Let $M$ be a compact K\"ahler manifold. There exists a compact Riemann surface $C_g$ of genus greater than one and a surjective holomorphic map $f: M \to C'$ with $g(C')\ge g(C)$ with connected fibers if and only if there exists a surjective homomorphism $h: \pi_1(M)\to \pi_1(C_g)$.
\end{theorem}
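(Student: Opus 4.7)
The plan is to establish the equivalence by proving the two directions separately; the forward implication is topological and elementary, while the reverse is the substantive content and relies on harmonic map theory in the spirit of Siu.

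For the direction $(\Rightarrow)$, suppose $f:M\to C'$ is a surjective holomorphic map with connected fibers and $g(C')\geq g$. Because $f$ is proper with connected fibers onto a normal target, $f_\ast:\pi_1(M)\to\pi_1(C')$ is surjective (standard from the long exact sequence of the generic fiber, adjusting for the finite singular locus of $f$). Since $g(C')\geq g$, there is a standard surjection of surface groups $\pi_1(C')\twoheadrightarrow \pi_1(C_g)$, realized geometrically by a degree-one map $C'\to C_g$ collapsing $g(C')-g$ handles. Composing with $f_\ast$ yields the desired $h$.

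For the direction $(\Leftarrow)$, assume $h:\pi_1(M)\twoheadrightarrow \pi_1(C_g)$. I would proceed in three steps. First, since $g\geq 2$ makes $C_g$ an aspherical $K(\pi_1(C_g),1)$-space, the homomorphism $h$ is induced by a continuous map $\phi:M\to C_g$, unique up to homotopy. Second, equip $C_g$ with its hyperbolic metric and apply the Eells--Sampson existence theorem to produce a harmonic map $u:M\to C_g$ homotopic to $\phi$; because $h$ is surjective onto the non-abelian, non-cyclic group $\pi_1(C_g)$, the map $u$ cannot factor through a tree or a closed geodesic, so $du$ has real rank $\geq 2$ at some point. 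Invoke the Siu-type $\partial\bar\partial$-Bochner rigidity theorem (combined with Sampson's pluriharmonicity theorem and the special structure of a one-complex-dimensional target of strongly negative curvature) to conclude that $u$ is either holomorphic or antiholomorphic; replacing the complex structure on $C_g$ by its conjugate if necessary, we may take $u$ to be holomorphic. Third, apply Stein factorization $u=\rho\circ f$ with $f:M\to C'$ of connected fibers onto a normal (hence smooth) curve $C'$ and $\rho:C'\to C_g$ finite. The composition $u_\ast=\rho_\ast\circ f_\ast$ is surjective while $f_\ast$ is surjective by the first paragraph, forcing $\rho_\ast:\pi_1(C')\twoheadrightarrow \pi_1(C_g)$; a surjection of surface groups requires $g(C')\geq g(C_g)=g$, completing the proof.

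The main obstacle is the rigidity step showing $u$ is holomorphic. This is the place where the K\"ahlerity of $M$ enters essentially, through the vanishing of the $\partial\bar\partial$-Bochner error terms, together with the strongly negative curvature of the hyperbolic target; because the target has real dimension only two, Siu's original strong-rigidity hypothesis on the rank of $du$ must be supplemented by a direct verification that $\partial u$ is a non-vanishing holomorphic section of the appropriate twisted bundle, which fails only when $u$ is constant. All the remaining ingredients (Eells--Sampson, Stein factorization, the genus estimate for a surjection of surface groups, and the elementary fundamental-group surjectivity from connected fibers) are classical and routine.
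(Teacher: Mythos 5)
The paper does not prove this statement: it is quoted as a known theorem of Siu and Beauville, with the reference (Theorem 1.5 of \cite{ABCKT}) given in the surrounding text, so there is no in-paper proof to compare against. The proof in that reference is Beauville's Hodge-theoretic one: a surjection $h:\pi_1(M)\to\pi_1(C_g)$ pulls back a Lagrangian subspace of the cup-product form on $H^1(\pi_1(C_g),\mathbb{C})$ to a $g$-dimensional isotropic subspace of $H^1(M,\mathbb{C})$, and one then invokes the Castelnuovo--de Franchis theorem in Catanese's form. You instead follow Siu's harmonic-map route; your forward direction is correct and standard.

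In the reverse direction your argument has a genuine gap at the rigidity step. It is false that a harmonic map $u$ from a compact K\"ahler manifold to a hyperbolic Riemann surface whose differential has real rank $2$ somewhere must be holomorphic or antiholomorphic: Siu's strong rigidity theorem requires the target to have complex dimension at least two and $\operatorname{rank}_{\mathbb{R}}du\ge 4$, and for a one-dimensional target the $\partial\bar{\partial}$-Bochner argument only yields pluriharmonicity (Sampson), i.e.\ that $\partial u$ is a holomorphic section of $\Omega^1_M\otimes u^*T^{1,0}C_g$ --- which does not force $\bar{\partial}u=0$. A concrete counterexample: take a non-conformal harmonic diffeomorphism $v:C'\to C_g$ between two distinct complex structures on a genus-$g$ surface and set $u=v\circ \mathrm{pr}_1$ on $M=C'\times X$; then $u$ is harmonic (pluriharmonic, in fact), $u_*$ is surjective, $\operatorname{rank}du=2$ on an open set, yet $u$ is neither holomorphic nor antiholomorphic. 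The correct tool is the factorization theorem of Siu (also Jost--Yau, Carlson--Toledo): any harmonic $u$ to a hyperbolic curve with $\operatorname{rank}du=2$ somewhere factors as $u=v\circ f$ with $f:M\to C'$ a surjective holomorphic map with connected fibers onto a compact Riemann surface and $v:C'\to C_g$ harmonic; the proof analyzes the holomorphic foliation defined by $\ker\partial u$ and shows its leaves are compact. Once you replace your rigidity claim by this factorization, your third step goes through essentially verbatim: $f_*$ is surjective, hence $v_*$ is surjective, and a surjection of surface groups abelianizes to a surjection on $H_1$, giving $g(C')\ge g$. So the architecture of your argument is sound, but the quoted rigidity statement as written is wrong and must be replaced by the factorization theorem.
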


\begin{corollary} (i) Let $(M, g)$ be a compact K\"ahler manifold with $\Ric_\ell\ge 0$ for some $1\le \ell\le m$. Then
there exists no surjective homomorphism $h: \pi_1(M)\to \pi_1(C_g)$. Furthermore, there is no subspace $V\subset H^1(M, \mathbb{C})$ with $\wedge^2 V=0$ in $H^2(M, \mathbb{C})$ and $\dim(V)\ge 2$. Namely $g(M)\le 1$. Similarly, if $\Ric_\ell\ge 0$,  $\pi_1(M)$ can not be of the type of an amalgamated product $\Gamma_1*_{\Delta}\Gamma_2$ with the index of $\Delta$ in $\Gamma_1$  greater than one and index of $\Delta$ in $\Gamma_2$ greater than two.

(ii) Let $(M, g)$ be a compact K\"ahler manifold with $S^M_\ell> 0$ for some $1\le \ell\le m$. Then $a(M)\le\ell-1$.

(iii) If $S^M_n\ge 0$, then any harmonic map $f: M\to N$  with $N$ being a locally Hermitian symmetric space,  can not have $\operatorname{rank}(f)=\dim(N)$.

\end{corollary}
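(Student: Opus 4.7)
For part (i), I plan to argue by contradiction via the Siu-Beauville theorem just quoted. A surjective homomorphism $h: \pi_1(M) \to \pi_1(C_g)$ with $g \ge 2$ produces a surjective holomorphic map $f: M \to C'$ onto a compact Riemann surface of genus $g(C') \ge g \ge 2$ with connected fibers; equipping $C'$ with its hyperbolic uniformizing metric makes $H^{C'} \equiv -1 < 0$, so Theorem \ref{thm:51}(i) (second alternative, using $\Ric^M_\ell \ge 0$) forces $f$ to be constant, the desired contradiction. The assertion $g(M) \le 1$ follows by the same mechanism once one recalls that a subspace $V \subset H^1(M,\mathbb{C})$ with $\wedge^2 V = 0$ and $\dim V \ge 2$ produces---via the Castelnuovo-de Franchis/Catanese theorem quoted in \cite{ABCKT}---a surjective holomorphic map with connected fibers onto a compact Riemann surface of genus $\ge 2$. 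The amalgamated-product statement uses the same reduction to a surjective holomorphic map to a curve of genus $\ge 2$, here supplied by Gromov's theorem on K\"ahler groups admitting nontrivial amalgamated-product decompositions of the specified index type.

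For part (ii), the plan is a one-line application of Theorem \ref{thm:51}(ii) to the Albanese map $\mathrm{Alb}: M \to \mathrm{Alb}(M)$. The target is a complex torus equipped with its flat K\"ahler metric, for which $\Ric^N \equiv 0$ and in particular $\Ric^N_\ell \equiv 0 \le 0$. With $S^M_\ell > 0$ by hypothesis, the second alternative of Theorem \ref{thm:51}(ii) yields $\dim f(M) < \ell$; since $\dim f(M) = \dim_{\mathbb{C}} \mathrm{Alb}(M) = a(M)$, this gives $a(M) \le \ell - 1$.

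For part (iii), the plan is to first reduce the harmonic map to the holomorphic setting via Siu-Sampson type rigidity: a harmonic map from a compact K\"ahler manifold to a locally Hermitian symmetric target whose differential attains maximal rank $n = \dim N$ on an open set must be $\pm$-holomorphic (the Sampson $\dbar$-energy component vanishes by the refined Bochner formula on such targets). Once holomorphicity is granted, applying Theorem \ref{thm:51}(ii) with $\ell = n$, together with the sign of $\Ric^N_n$ dictated by the type of $N$ and the hypothesis $S^M_n \ge 0$, forces $\dim f(M) < n$, contradicting $\mathrm{rank}(f) = n$.

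The main obstacle is not any single calculation---Theorem \ref{thm:51} carries all the geometric content---but rather securing the dictionary between group-theoretic or cohomological input and the existence of a genuine surjective holomorphic map to a curve of genus $\ge 2$ with connected fibers in part (i). This dictionary (Siu-Beauville, Castelnuovo-de Franchis-Catanese, Gromov) must be invoked carefully so that the target curve has genus at least two and the fibers are connected, because only then does the hyperbolic metric supply $H^{C'} < 0$ and our obstruction theorem apply. In part (iii), the delicate point is instead the passage from harmonic to holomorphic, which is sensitive to the signature of $N$ and may require restricting the statement to noncompact or Euclidean type targets.
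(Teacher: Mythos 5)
Your proposal is correct and follows essentially the same route as the paper: Siu--Beauville, Castelnuovo--de Franchis/Catanese, and Gromov--Schoen supply the holomorphic map to a negatively curved target, after which Theorem \ref{thm:51} (parts (i) and (ii)) delivers the contradiction, with part (ii) applied to the Albanese map and part (iii) reduced to the holomorphic case via Siu's rigidity exactly as you describe. The one small divergence is the amalgamated-product case: the paper applies the maximum-principle argument of Theorem \ref{thm:51}(i) directly to the Gromov--Schoen equivariant holomorphic map $\widetilde{M}\to\mathbb{D}$ (the relevant quantity is deck-invariant and so descends to the compact $M$), rather than first descending to a compact quotient curve, which sidesteps the possibility that $\rho(\Gamma)\backslash\mathbb{D}$ is only an orbifold.
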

\begin{proof} The first part of (i) follows from part (i) of Theorem \ref{thm:51}. Namely  apply it to $N=C_g$ and combine it with the above Siu-Beauville's result. The second part follows by combining Theorem \ref{thm:51} with Theorem 1.4 of \cite{ABCKT} due to Catanese  (cf. Theorem 1.10 of \cite{Cat}).  For the second part involving the amalgamated product, apply  Theorem 6.27 of \cite{ABCKT}, namely a result of Gromov-Schoen below instead,  to conclude that there exists an equivariant  holomorphic map from $\widetilde{M} $ into the Poincar\'e disk. This induce a contradiction with part (i) of Theorem \ref{thm:51} since the maximum principle argument still applies (see also \cite{NR}). The  statement of (ii) is an easy consequence of part (ii) of Theorem \ref{thm:51}.

For part (iii), by Siu's result on the holomorphicity of the harmonic maps between K\"ahler manifolds, namely Theorem 6.13 of \cite{ABCKT}, any such a harmonic map must be holomorphic. Then part (ii) of Theorem \ref{thm:51} induces a contradiction noting that the canonical metric on $N$ is K\"ahler-Einstein with negative Einstein constant.
\end{proof}

\begin{theorem}[Gromov-Schoen]
Let $M$ be a compact K\"ahler manifold with fundamental group $\Gamma=\Gamma_1*_{\Delta}\Gamma_2$ with the index of $\Delta$ in $\Gamma_1$  greater than one and index of $\Delta$ in $\Gamma_2$ greater than two. Then there exists a representation $\rho: \pi_1(M)\to \operatorname{Aut}(\mathbb{D})$, where $\mathbb{D}=\{z\, |\, |z|=1\}$, with discrete cocompact image, and a holomorphic equivariant map from the universal cover $\widetilde{M}\to \mathbb{D}$, which also descends to a surjective map $M\to \rho(\Gamma)/\mathbb{D}$.
\end{theorem}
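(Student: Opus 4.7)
The plan is to follow the original Gromov--Schoen scheme based on equivariant harmonic maps into singular NPC (non-positively curved) targets. To the amalgamated product $\Gamma = \Gamma_1 *_\Delta \Gamma_2$ is associated its Bass--Serre tree $T$, on which $\Gamma$ acts simplicially by isometries. The hypotheses that $\Delta$ has index greater than one in $\Gamma_1$ and greater than two in $\Gamma_2$ are precisely what guarantees that this action has no global fixed point, is minimal, and is non-elementary (some element acts as a hyperbolic translation with a non-degenerate axis). This tree will play the role of the ``target geometry'' that is eventually rigidified to the Poincar\'e disk.

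The first analytic step is to invoke the Korevaar--Schoen existence theorem for equivariant harmonic maps from a compact K\"ahler manifold into an NPC metric space: this produces a finite-energy, $\Gamma$-equivariant map $u : \widetilde M \to T$ minimizing Dirichlet energy in its equivariance class. The non-elementarity of the $\Gamma$-action on $T$ rules out the trivial (constant) minimizer, so $u$ is a genuine non-trivial harmonic map.

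The core technical step is the Gromov--Schoen regularity theorem: the singular set $S(u)\subset \widetilde M$ (the preimage of the vertices of $T$) has real Hausdorff codimension at least two, and on $\widetilde M\setminus S(u)$ the map $u$ locally takes values in a single edge of $T$ and is therefore a real harmonic function. At this point the K\"ahler hypothesis enters through a Siu--Sampson type $\partial\bar\partial$-Bochner identity applied on the regular set: the harmonicity of $u$ combined with the K\"ahler identities forces $u$ to be pluriharmonic there, so that the local $(1,0)$-part $\partial u$ is a holomorphic $1$-form. Since its domain of definition has complement of real codimension two, Hartogs-type extension produces a globally defined $\Gamma$-invariant holomorphic $1$-form on $\widetilde M$, whose kernel defines a holomorphic foliation. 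Stein factorization of the corresponding map to the Albanese-like quotient yields a Riemann surface $\Sigma$ and a surjective holomorphic $\Gamma$-equivariant map $\widetilde M \to \Sigma$.

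Finally, equivariance descends the $\Gamma$-action to $\Sigma$. The translation lengths of the hyperbolic elements of $\Gamma$ on $T$ match those on the leaf space, which forces the induced action on $\Sigma$ to be discrete and cocompact; combined with the non-elementarity, $\Sigma$ must be hyperbolic with universal cover $\mathbb{D}$. Lifting $\widetilde M \to \Sigma$ to this universal cover produces the desired equivariant holomorphic map $\widetilde M \to \mathbb{D}$, and the induced $\rho:\pi_1(M)\to\operatorname{Aut}(\mathbb{D})$ has discrete cocompact image, with the map descending to a surjective $M \to \rho(\Gamma)/\mathbb{D}$. The principal obstacle is the regularity/Bochner step across $S(u)$: without the Gromov--Schoen codimension-two bound one cannot freely apply K\"ahler identities globally, and the delicate point is to patch the locally defined holomorphic $1$-forms $\partial u$ into a single global holomorphic object whose zeros precisely account for the singular locus of $u$.
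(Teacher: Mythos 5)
The paper does not prove this statement: it is quoted as a known result of Gromov--Schoen (Theorem 6.27 of the book of Amor\'os--Burger--Corlette--Kotschick--Toledo) and used as a black box in the corollary on amalgamated products. So there is no in-paper proof to compare against; what I can assess is whether your outline is a faithful reconstruction of the standard argument. It is, in its architecture: Bass--Serre tree, non-elementarity of the action from the index hypotheses, Korevaar--Schoen/Gromov--Schoen existence of a finite-energy equivariant harmonic map, the codimension-two regularity theorem, the singular Bochner formula giving pluriharmonicity, and factorization through a Riemann surface. These are exactly the steps in Gromov--Schoen and in Chapter 6 of the cited book.

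Two points in your sketch are stated more smoothly than the actual argument allows. First, the local $(1,0)$-parts $\partial u$ do \emph{not} patch into a single globally defined holomorphic $1$-form on $\widetilde M$: on overlaps the identification of adjacent edges of $T$ is canonical only up to sign and an additive constant, so what is globally well-defined is the symmetric square $\partial u\otimes\partial u$ (a holomorphic section of $S^2\Omega^1_{\widetilde M}$, extended across the singular set by the Lipschitz bound on $u$ plus a removable-singularity theorem), or equivalently the holomorphic foliation $\ker\partial u$. Second, obtaining the Riemann surface is not a Stein factorization of an ``Albanese-like'' map; one must prove the factorization theorem, i.e.\ that the leaves of this foliation (equivalently the connected components of the fibres of $u$) are closed analytic subvarieties and that the leaf space is Hausdorff, which uses compactness of $M$ and the discreteness of the vertex set of $T$. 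With those two corrections your outline is the correct proof; as written, the gluing step would fail whenever the $\pm1$ monodromy is nontrivial.
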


In fact the vanishing theorem of \cite{Ni-Zheng2} implies that for K\"ahler manifolds with $S_\ell>0$, there does not exist a $k$-wedge subspace in $H^{1, 0}$ (in the sense of \cite{Cat}) for any $k\ge \ell$. Moreover, such manifolds have to be Albanese primitive for $k\ge \ell$.

For noncompact manifolds, Theorem \ref{eq:sch-ni} and Theorem \ref{thm:main1} can also be applied, together with Theorem 4.14 and 4.28 of \cite{ABCKT}, to infer some restriction on K\"ahler manifolds with nonnegative holomorphic sectional curvature or  with $\Ric_\ell\ge 0$.

\begin{corollary} Assume that $M$ is a complete K\"ahler manifold with bounded geometry with $\Ric^M_\ell\ge 0$. Then
(i) $H^1(M, \mathbb{C})=\{0\}$ implies that $\mathcal{H}^1_{L^2}(M)=\{0\}$;

(ii) And $\dim (\mathcal{H}^1_{L^2, ex}(M))\le 1$.
\end{corollary}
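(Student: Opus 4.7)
The plan is to argue both parts by contradiction. In each case I would use Theorems 4.14 and 4.28 of \cite{ABCKT} to produce a nonconstant holomorphic map from $M$ to a hyperbolic Riemann surface out of a putative nontrivial element of $\mathcal{H}^1_{L^2}(M)$ (respectively $\mathcal{H}^1_{L^2,\,ex}(M)$), and then rule out such a map via Theorem \ref{thm:main1}(i).

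The common setup is as follows. Because $M$ has bounded geometry, its bisectional curvature is two-sided bounded; in particular the lower bound required by the noncompact clause of Theorem \ref{thm:main1} is in place. Any hyperbolic Riemann surface $\Sigma$ carries a complete K\"ahler metric of constant holomorphic sectional curvature $-1$, so with $\Ric^M_\ell \ge 0$ (hence $K=0$ in the hypothesis) and $\kappa=1$, the estimate of Theorem \ref{thm:main1}(i) forces $\sigma_\ell(x) \equiv 0$. Thus every holomorphic map $f: M \to \Sigma$ is constant.

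For (i), I would assume $\mathcal{H}^1_{L^2}(M) \ne \{0\}$ while $H^1(M,\mathbb{C}) = \{0\}$. Theorem 4.14 of \cite{ABCKT} is tailored to exactly this situation: a nontrivial $L^2$ harmonic $1$-form whose ordinary cohomology class vanishes gives rise to a surjective holomorphic map $f: M \to \Sigma$ onto a hyperbolic Riemann surface, contradicting the constancy statement in the previous paragraph. For (ii), I would suppose $\dim \mathcal{H}^1_{L^2,\,ex}(M) \ge 2$ and invoke Theorem 4.28 of \cite{ABCKT}, which converts two linearly independent exact $L^2$ harmonic $1$-forms into a nonconstant holomorphic map $f: M \to \Sigma$ to a hyperbolic Riemann surface; once again Theorem \ref{thm:main1}(i) forces $f$ to be constant, a contradiction.

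The main obstacle is of a bibliographic nature: one must verify that the hypotheses of the Arapura-style Theorems 4.14 and 4.28 of \cite{ABCKT} are indeed met under the ``bounded geometry'' assumption used here, and that the Riemann surface produced is genuinely hyperbolic so that the constant $\kappa$ in Theorem \ref{thm:main1}(i) is strictly positive. Once these checks are secured, the rigidity principle ``$K = 0 \Longrightarrow f \text{ constant}$'' inherited from Theorem \ref{thm:main1} handles both assertions uniformly.
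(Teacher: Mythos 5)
Your proposal is correct and follows exactly the route the paper intends: the corollary is justified there only by the preceding sentence, which combines Theorems 4.14 and 4.28 of \cite{ABCKT} with the Liouville-type consequence ($K=0$ forces $f$ constant) of Theorem \ref{thm:main1}(i), precisely as you do. The only point worth keeping in mind is the bibliographic check you already flagged, namely that the Riemann surface produced by the \cite{ABCKT} results is hyperbolic so that $\kappa>0$.
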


Here $\mathcal{H}_{L^2}(M)$ is the space of the harmonic $L^2$-forms and $\mathcal{H}^1_{L^2, ex}(M)$ is the space of the $L^2$ harmonic exact forms. The statements are trivial when $M$ is compact.

\section{Mappings from  positively curved manifolds}

  In \cite{NZ}, the orthogonal $\Ric^\perp$ was studied. Recall that $\Ric^\perp (X, \overline{X})=\Ric(X, \overline{X})-H(X)/|X|^2$. We say $\Ric^\perp\ge K$ if $\Ric^\perp (X, \overline{X})\ge K|X|^2$. It is easy to see that  $B^\perp\ge \kappa$ implies that $\Ric^\perp \ge (m-1)\kappa$. Similar upper estimate also holds if $B^\perp$ is bounded from above. It was also shown in \cite{NZ} via explicit examples that $B^\perp$ is independent of the holomorphic sectional curvature $H$, as well as the Ricci curvature. Similarly $\Ric^\perp$ is independent of $\Ric$, as well as $H$. It was proved in \cite{NZ} that for manifold whose  $\Ric^\perp$  has a  positive lower bound, the manifold is compact with an effective diameter uppper bound. (See \cite{Tsu} for the corresponding result for holomorphic sectional curvature.) It is not hard to see that for K\"ahler manifolds with $\Ric_\ell\ge K>0$, they must be compact with an upper diameter estimate.

Applying $\partial\bar{\partial}$-Bochner formulae we  have the following estimates in the spirit of \cite{Ni-1807}.

\begin{theorem}\label{thm:hoop}
 (i) Assume that  $\Ric^M_\ell (X, \overline{X})\ge K|X|^2$,   and     $H^N(Y)\le \kappa |Y|^4$, with $K, \kappa>0$. Then for any nonconstant $f: M\to N$
 $$
 \max_{x\in M} \sigma_\ell(x)\ge \frac{K}{\kappa}.
 $$
 (ii) Assume that  $(B^M)^{\perp}\ge K$,   and     $(B^N)^\perp\le \kappa$, with $K, \kappa>0$. Then for any nonconstant $f: M\to N$, $\dim(f(M))=m$. Moreover for any $\ell<\dim(M)$
 $$
 \max_{x\in M} \sigma_\ell(x)\ge \ell  \frac{K}{\kappa}.
 $$
 (iii) Assume that  $\Ric^M_\ell\ge K$, and that $\Ric^N_\ell \le \kappa $, with $K,\kappa>0$. Then for any  holomorphic map $f:M\to N$ with $\dim(f(M))\ge \ell$
 $$
 \max_{x}\|\Lambda^\ell \partial f\|_0^2(x) \ge \left(\frac{K}{\kappa}\right)^\ell.
 $$
 (iv) Assume that  $(\Ric^M)^\perp \ge K$, and that $(B^N)^\perp \le \kappa $, with $K,\kappa>0$. Then for any  holomorphic map $f:M\to N$ with $\dim(f(M))\ge m-1$, $\dim(f(M))=m$. Moreover
 $$
 \max_{x}\|\Lambda^m \partial f\|_0^2(x) \ge \left(\frac{K}{(m-1)\kappa}\right)^{m}.
 $$
 In the case $\dim(M)=\dim(N)$, only $(\Ric^N)^{\perp}\le (m-1)\kappa$ is needed. In general $(B^N)^\perp \le \kappa $ can be weakened to $(\Ric^N_m)^{\perp}\le (m-1)\kappa$. Here  $(\Ric^N_\ell)^{\perp}$ is the orthogonal Ricci curvature of the curvature tensor $R^N$ restricted to  $m$-dimensional subspaces.
\end{theorem}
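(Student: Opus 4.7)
The plan is to mirror the Bochner-formula proofs of Section 4 with the curvature signs reversed. Under each positivity hypothesis on $M$, Bonnet--Myers type results of \cite{Tsu, Ni, NZ} force $M$ to be compact, hence the continuous functions $\sigma_\ell$ and $\|\Lambda^\ell\partial f\|_0^2$ attain their maxima. Pick a maximum point $x_0$, which has positive value in each part (by $f$ non-constant in (i)--(ii), or $\dim(f(M))\ge\ell$ in (iii)--(iv)); choose the normal coordinates of Proposition \ref{prop:31} so that the smooth barriers $U_\ell \le \sigma_\ell$ and $W_\ell \le \|\Lambda^\ell\partial f\|_0^2$ from Propositions \ref{prop:21} and \ref{prop:22} also achieve a local max at $x_0$. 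Then $\partial_\gamma\partial_{\bar\gamma}\log(\mathrm{barrier})(x_0)\le 0$ for every $\gamma$, and any positive linear combination yields an inequality to exploit.

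For (i), sum (\ref{eq:31}) with $v=\partial/\partial z^\gamma$ over $\gamma=1,\dots,\ell$, discard the nonnegative gradient-squared term, and use $\Ric^M_\ell \ge K$:
\[
0\ \ge\ \frac{1}{U_\ell}\Bigl(-\sum_{\alpha,\gamma=1}^\ell R^N_{\alpha\bar\alpha\gamma\bar\gamma}|\lambda_\alpha|^2|\lambda_\gamma|^2 \;+\; K\,U_\ell\Bigr).
\]
The sign-reversed analogue of Lemma \ref{lem:41} (the same spherical averaging, now with $H^N\le\kappa$, $\kappa>0$) yields $\sum R^N \le \kappa U_\ell^2$, so $\sigma_\ell(x_0)=U_\ell \ge K/\kappa$. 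Part (iii) is the parallel computation on $\log W_\ell$: apply the operator $\mathcal{L}_\ell = \sum_{\gamma=1}^\ell \tfrac{1}{|\lambda_\gamma|^2}\partial_\gamma\partial_{\bar\gamma}$ to (\ref{eq:32}) at $x_0$ (where it is well-defined because $W_\ell(x_0)>0$); the hypotheses $\Ric^M_\ell\ge K$ and $\Ric^N_\ell \le \kappa$ give
\[
0\ \ge\ -\Scal^N(f(x_0),\Sigma') + K\sum_{\gamma=1}^\ell \tfrac{1}{|\lambda_\gamma|^2}\ \ge\ -\ell\kappa + K\sum_{\gamma=1}^\ell \tfrac{1}{|\lambda_\gamma|^2},
\]
and AM--GM on $1/|\lambda_1|^2,\dots,1/|\lambda_\ell|^2$ upgrades this to $\prod_{\gamma=1}^\ell|\lambda_\gamma|^2 \ge (K/\kappa)^\ell$.

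Parts (ii) and (iv) run the same templates with orthogonal-bisectional bounds replacing the Ricci ones, but a more careful accounting is required since the pointwise bounds $(B^M)^\perp \ge K$, $(B^N)^\perp \le \kappa$ (resp.\ $(\Ric^M)^\perp\ge K$, $(\Ric^N_m)^\perp\le(m-1)\kappa$) control only off-diagonal ($\alpha \ne \gamma$) entries of the curvature tensors. My plan is to split the double sums in (\ref{eq:31}) (resp.\ in $\mathcal{L}_m\log W_m$) into diagonal and off-diagonal pieces and choose the weights of the $\partial_\gamma\partial_{\bar\gamma}$ combination so that the diagonal holomorphic-sectional-curvature contributions $H^M,H^N$ either cancel against each other or are absorbed by the off-diagonal $B^\perp$ data. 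For (ii) the off-diagonal count over $1\le\gamma\le\ell$ supplies the extra factor $\ell$ yielding $\sigma_\ell\ge\ell K/\kappa$; for (iv) the $\mathcal{L}_m$-normalization reduces the $N$-contribution to precisely the orthogonal-Ricci trace, which is why the weakened hypothesis $(\Ric^N_m)^\perp\le(m-1)\kappa$ suffices in place of the pointwise $B^\perp$ bound.

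The main obstacle is the initial full-rank assertion $\dim(f(M))=m$ in (ii) and (iv). If the rank of $\partial f$ drops identically to some $d<m$ then $W_m\equiv 0$ and the barrier argument above is inapplicable. My plan for this step is to run the (iii)-type estimate on $W_d$ on the open set of maximal rank $d$, producing a uniform positive lower bound on $\prod_{\alpha=1}^d|\lambda_\alpha|^2$ at interior maxima; combining this with the strict orthogonal positivity $(B^M)^\perp\ge K>0$ (resp.\ $(\Ric^M)^\perp\ge K$) and the behavior of $|\lambda_{d+1}|$ at the boundary of the top-rank locus should yield a contradiction along the lines of the proof of Theorem \ref{thm:51}, forcing $d=m$.
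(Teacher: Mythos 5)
Your treatment of parts (i) and (iii) is correct and essentially the paper's: for (i) you sum (\ref{eq:31}) over the block and invoke the sign-reversed Lemma \ref{lem:41}, and for (iii) your weighted operator $\mathcal{L}_\ell$ plus AM--GM is a legitimate variant of the paper's simpler move of testing (\ref{eq:32}) on the single direction $v=\partial/\partial z^\ell$, which gives $|\lambda_\ell|^2\ge K/\kappa$ directly and hence $\prod_{\gamma\le\ell}|\lambda_\gamma|^2\ge(K/\kappa)^\ell$ since $|\lambda_\ell|$ is the smallest of the first $\ell$ singular values.

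Parts (ii) and (iv) are where your plan has a genuine gap. Splitting the curvature sums into diagonal and off-diagonal pieces and hoping the diagonal contributions $H^M,H^N$ ``cancel or are absorbed'' cannot work: as the paper itself emphasizes in Section 6, $B^\perp$ is independent of $H$, so the hypotheses $(B^M)^\perp\ge K$ and $(B^N)^\perp\le\kappa$ give no control whatsoever over the diagonal terms $R^M_{\gamma\bar\gamma\gamma\bar\gamma}$ and $R^N_{\gamma\bar\gamma\gamma\bar\gamma}$ that necessarily appear once you test (\ref{eq:31}) in a direction $\partial/\partial z^\gamma$ with $\gamma\le\ell$; these terms enter with nonnegative weights, can have arbitrary sign and size, and there is no mechanism forcing the $M$-diagonal to offset the $N$-diagonal. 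The correct move, and the one the paper makes, is to evaluate the complex Hessian of $\log U_\ell$ at the maximum point of $\sigma_\ell$ in the \emph{single transverse} direction $v=\partial/\partial z^m$ (available because $\ell<m$): then $\partial f(v)$ is proportional to $\partial/\partial w^m$, orthogonal to $\partial/\partial w^\alpha$ for all $\alpha\le\ell$, so every curvature term on both the $M$- and $N$-sides is genuinely orthogonal-bisectional and (\ref{eq:31}) yields $0\ge-\kappa|\lambda_m|^2+K$. This single inequality also disposes of what you call the main obstacle: $|\lambda_m|^2\ge K/\kappa>0$ at that point gives full rank there, hence $\dim(f(M))=m$, and $\sigma_\ell\ge\ell|\lambda_m|^2\ge\ell K/\kappa$. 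Your proposed detour through the boundary of the top-rank locus is therefore unnecessary, and as stated it would not close anyway, since a (iii)-type estimate on $W_d$ needs an upper bound on $\Ric^N_d$ by a negative constant, which does not follow from $(B^N)^\perp\le\kappa$ with $\kappa>0$. Part (iv) runs the same way, testing (\ref{eq:32}) for $\ell=m-1$ on $v=\partial/\partial z^m$: the $M$-side sum is exactly $(\Ric^M)^\perp(v,\bar v)\ge K$ and the $N$-side is controlled by $(\Ric^N_m)^\perp\le(m-1)\kappa$, giving $|\lambda_m|^2\ge K/((m-1)\kappa)$ and the stated bound on $\|\Lambda^m\partial f\|_0^2$.
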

\begin{proof} First observe that under any assumption of the  above theorem $M$ is compact. From Lemma \ref{lem:41} and (\ref{eq:31}), part (i) follows. For part (ii), at the point $x_0$ where $\sigma_\ell(x)$ attains its maximum, applying (\ref{eq:31}) to $v=\frac{\partial\ }{\partial z^m}$,  we have that
$$
0\ge -\kappa |\lambda_m|^2+K
$$
which implies that $|\lambda_m|^2\ge \frac{K}{\kappa}$. Then claimed estimate follows from $\sigma_\ell\ge \ell |\lambda_m|^2$.

 For part (iii), we apply (\ref{eq:32}) at the point $x_0$, where $\|\Lambda^\ell \partial f\|_0^2(x)$ attains its  maximum. In particular we apply it to $v=\frac{\partial\ }{\partial z^\ell}$ and let $\Sigma=\operatorname{span}\{ \frac{\partial\ }{\partial z^1}, \cdots, \frac{\partial\ }{\partial z^\ell}\}$. Hence at $x_0$
 $$
 0\ge -\Ric^N(x_0, f(\Sigma)) |\lambda_\ell|^2+\Ric^M (x_0, \Sigma).
 $$
 Hence we derive that $|\lambda_\ell|^2\ge \frac{K}{\kappa}$. The claimed result then follows.

 The part (iv) can be proved similarly.
\end{proof}

The part (ii) of the theorem is not as strong as it appears, since $B^\perp>0$ implies that $h^{1,1}(M)=1$. On the other hand we have the following observation.

\begin{proposition}
 Let $M$ be a K\"ahler manifold with $h^{1,1}(M)=1$. Then any holomorphic map $f: M\to N$, with $\dim(f(M))<\dim(M)$ must be a constant map. Hence $g(M)\le 1$, if $\dim(M)\ge 2$. In particular, if  the Picard number $\rho(M)=1$ and $S_2^M>0$, any holomorphic map $f: M\to N$, with $\dim(f(M))<\dim(M)$ must be a constant map.
\end{proposition}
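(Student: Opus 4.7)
The plan is to use $h^{1,1}(M)=1$ to force the pull-back of the target K\"ahler form to be cohomologous to a multiple of $\omega_g$, then exploit the dimension drop $\dim f(M)<\dim M$ to show that multiple vanishes, and finally use semi-positivity to conclude pointwise vanishing. Throughout I assume (as is standard in this paper) that $N$ is K\"ahler with K\"ahler form $\omega_h$, and observe that $h^{1,1}(M)=1$ tacitly presupposes that $M$ is compact K\"ahler.

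Write $\omega_f\doteqdot f^{*}\omega_h$, a smooth closed semi-positive $(1,1)$-form on $M$. By $h^{1,1}(M)=1$, there exists $c\in\mathbb{R}$ with $[\omega_f]=c\,[\omega_g]$ in $H^{1,1}(M)$. Setting $m=\dim M$, the identity $\int_M \omega_f^{m}=c^{m}\int_M\omega_g^{m}$ forces $c=0$, since $\omega_f^{m}\equiv 0$ pointwise (at every $x$ the complex rank of $df_x$ is bounded by $\dim_{\mathbb{C}}f(M)<m$, hence so is the rank of the semi-positive Hermitian form $\omega_f$). So $[\omega_f]=0$ in $H^{1,1}(M)$ and
$$
0=\int_{M}\omega_f\wedge \omega_g^{m-1}=\frac{1}{m}\int_{M}(\operatorname{tr}_{\omega_g}\omega_f)\,\omega_g^{m}.
$$
Since $\operatorname{tr}_{\omega_g}\omega_f\ge 0$ pointwise, vanishing of the integral forces $\operatorname{tr}_{\omega_g}\omega_f\equiv 0$, which for a semi-positive form means $\omega_f\equiv 0$; equivalently $\|\partial f\|^{2}\equiv 0$, so $f$ is constant.

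For $g(M)\le 1$ when $m\ge 2$: if instead $g(M)\ge 2$, Catanese's theorem (Theorem 1.4 of \cite{ABCKT}) produces a surjective holomorphic fibration $f\colon M\to C'$ onto a compact Riemann surface of genus $\ge 2$; then $\dim f(M)=1<m$, contradicting the first part. For the ``in particular'' clause, I plan to show that $\rho(M)=1$ together with $S_{2}^{M}>0$ forces $h^{1,1}(M)=1$: the vanishing theorem of \cite{Ni-Zheng2} gives $h^{2,0}(M)=0$ under $S_2^M>0$, so $H^{2}(M,\mathbb{R})=H^{1,1}(M,\mathbb{R})$, the K\"ahler cone (open in $H^{2}(M,\mathbb{R})$) meets $H^{2}(M,\mathbb{Q})$ by density, whence $M$ is projective by Kodaira embedding, and the Lefschetz $(1,1)$ theorem identifies $\rho(M)$ with $h^{1,1}(M)$. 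The point I expect to warrant the most care is the pointwise (not merely generic) rank bound $\operatorname{rank}(df_x)\le\dim_{\mathbb{C}}f(M)$ underlying $\omega_f^{m}\equiv 0$; this follows from upper semicontinuity of rank together with the fact that the generic rank of a holomorphic map equals the dimension of its image.
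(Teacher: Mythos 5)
Your proof is correct and follows essentially the same route as the paper: pull back $\omega_h$, use $h^{1,1}(M)=1$ to write $[f^*\omega_h]=c[\omega_g]$, rule out $c>0$ via the dimension drop (the paper states this as "the second case implies $\dim(f(M))=m$", which you justify via $\int_M(f^*\omega_h)^m=c^m\int_M\omega_g^m$ and the pointwise rank bound), and conclude $f^*\omega_h\equiv 0$ from semi-positivity; the $g(M)\le 1$ and Picard-number claims are handled via Catanese and the Lefschetz $(1,1)$/exponential-sequence argument exactly as in the paper. Your detour through Kodaira embedding for the last clause is unnecessary (the surjectivity of $c_1$ onto integral $(1,1)$-classes holds for any compact K\"ahler manifold with $h^{0,2}=0$), but it is not an error.
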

\begin{proof}
In fact $f^*\omega_h$, with $\omega_h$ being the K\"ahler form of $N$, is a $d$-closed positive $(1,1)$-form. By the assumption $[f^*\omega_h]$ proportional to $[\omega_g]$.  Hence it must be either zero or a positive multiple of $[\omega_g]$. Since the second case implies that $\dim(f(M))=m$, only the first case can occur, which implies that $f$ is a constant map.

Note that this implies that for any K\"ahler manifold $M$ with $\dim(M)\ge 2$ and $h^{1,1}(M)=1$, the genus $g(M)\le 1$, in view of the result of Catanese (cf. Theorem 1.10 of \cite{Cat}) since otherwise there exists a nonconstant  holomorphic map $f: M\to C_g$ with $C_g$ being a Riemann surface of genus $g(M)$. Since the first Chern class map $c_1: H^{1}(M, \mathcal{O}^*)\to \mathcal{H}^{1,1}(M)\cap H^2(M, \mathbb{Z})$ is onto, and $S^M_2>0$ implies that $H^2(M, \mathbb{C})=\mathcal{H}^{1,1}(M)$, the assumption then implies $h^{1,1}(M)=1$. The last result then follows from the first.
\end{proof}
Taking $\kappa\to 0$, the part (ii) of Theorem \ref{thm:hoop} also implies that any holomorphic map from  a compact manifold with $B^\perp>0$ into one with $B^\perp\le 0$ must be a constant map (cf. Theorem \ref{thm:51}). Given that $B^\perp$ is independent of $H$ and $\Ric$, this does not follow from Yau-Royden's estimate Theorem \ref{thm-sch-roy}, nor from Theorem \ref{thm:sch1}.  The part (iv) provides an additional information on compact K\"ahler manifolds with $\Ric^\perp>0$.

\section*{Acknowledgments} {We would like to thank James McKernan (particularly bringing my attention to the work \cite{Laz}) and Fangyang Zheng for conversations regarding holomorphic maps from $\mathbb{P}^m$. We are also grateful to Yanyan Niu for informing \cite{Mu}.}

\end{document}